\documentclass[a4paper,12pt]{amsart}

\usepackage[utf8]{inputenc}
\usepackage[T1]{fontenc}
\usepackage{amsmath,amssymb,amsfonts,amsthm,mathrsfs}
\usepackage{geometry}
\usepackage{graphicx}
\usepackage{float}
\usepackage[english]{babel}
\usepackage{tikz}
\usetikzlibrary{arrows}
\usetikzlibrary{decorations.markings}

\theoremstyle{plain}
\newtheorem{df}{Definition}
\newtheorem{thm}{Theorem}
\newtheorem{lem}{Lemma}
\newtheorem{prop}{Proposition}
\newtheorem*{ex}{Example}

\newtheorem{cor}{Corollary}
\newtheorem*{thm*}{Theorem}
\newcommand{\mot}{}
\newtheorem*{thmref_interne}{\mot{}}
\newenvironment{thmref}[2]{
	\renewcommand{\mot}{#1 #2}
	\begin{thmref_interne}}
	{\end{thmref_interne}
}

\newcommand\mb{\mathbb}
\newcommand\mc{\mathcal}

\newcommand\mr{\mathrm}
\newcommand\ms{\mathscr}

\renewcommand\emph[1]{\textup{\textbf{#1}}}

\title{
Irrational series II\\ Summation by packages
}

\author{Olivier Thom}
\date{}

\begin{document}
\maketitle

\begin{abstract}
Discrete sums of exponentials $g(w) = \sum a_{\beta} \mr{e}^{\beta w}$ with positive exponents may converge not normally in neighborhoods $H$ of $-\infty$ which do not contain half-planes.
We study different notions of convergence for these series and in particular the intuitive notion of summation by packages.

Indeed, joining in packages the terms in the sum $g(w)$ whose exponents are close together, and summing first inside each package may result in massive cancellations.
We show that discrete sums $g(w)$ which are bounded in what we call logarithmic neighborhoods can always be summated by packages.
\end{abstract}

\tableofcontents
\newpage

\section{Introduction}
\subsection{Context}

This article is a sequence to \cite{thom_irrational1}.
In these articles, we want to study the class of holomorphic functions which can be written $g(w) = \sum_{\beta \in R} a_{\beta} \mr{e}^{\beta w}$, where $R$ is a closed discrete subset of $\mathbb{R}^+$, mainly $R=R_{\alpha}=\mathbb{N} + \alpha^{-1} \mathbb{N}$ for some $\alpha^{-1}\in \mathbb{R}^+$ ; we will call these irrational series.
Note that if $z=\mr{e}^w$, this function can be written $\tilde{g}(z) = \sum_{\beta \in R} a_{\beta} z^{\beta}$ ; this last function is multi-valuated and we prefer to consider it as a function of the variable $w$.

These series appear naturally in several contexts, for example in problems whith small divisors.
If $\tilde{f}(z) = \mr{e}^{2i\pi \alpha}z + \sum b_n z^n$ is a germ of diffeomorphism with irrational rotation number $\alpha$, we can consider the change of variable $z=\mr{e}^w$ as before, which gives a germ $f(w) = w + 2i\pi \alpha + \sum c_n \mr{e}^{nw}$ at $w=-\infty$.
The solutions $g(w)$ of the equation $g\circ f = g$ can be written $g(w) = \sum_{\beta\in R_{\alpha}} a_{\beta} \mr{e}^{\beta w}$ for $R_{\alpha} = \mathbb{N} + \alpha^{-1} \mathbb{N}$.
This is a consequence of a result of Écalle stated in \cite{ecalle_small_denominators} ; the first article \cite{thom_irrational1} can be used to give an alternative proof of this fact, as explained in its introduction.

Normal convergence is too strong for these series.
For example, 
\[
\frac{\mr{e}^{(\beta+h)w}-\mr{e}^{\beta w}}{h} \underset{h\to 0}{\longrightarrow} \frac{d}{dt} (\mr{e}^{tw})\vert_{t=\beta} = w\mr{e}^{\beta w},
\]
so that coefficients can be arbitrarily large while the function remains bounded.
Finding and comparing other notions of convergence for these series is the purpose of this article.

The weakest possible notion of convergence for a formal series $\hat{g}=\sum a_{\beta} \mr{e}^{\beta w}$ is the existence of a neighborhood $H$ of $-\infty$ and a holomorphic function $g\in \mc{O}(H)$ whose asymptotic development at $-\infty$ is $\hat{g}$.
In general, such a function $g$ is not guaranteed to be unique, and this notion depends on the shape of the neighborhood $H$.

Another notion of convergence is Borel resummation : the Borel transform of $\hat{g}$ is $\mc{B}\hat{g} = \hat{D} = \sum a_{\beta} \delta_{\beta}$ (it is a priori only a formal sum of diracs).
If we can give a sense to the evaluation of the formal distribution $\hat{D}$ on the function $e_w(t) = \mr{e}^{tw}$, then the Laplace transform of $\hat{D}$ is $\mc{L}\hat{D}(w) = \langle \hat{D}, e_w \rangle$, and the function $\mc{L}\hat{D}$ is by definition the Borel resummation of $\hat{g}$, wherever it converges.

It turns out that when $\hat{g}$ is the asymptotic development of some function $g\in \mc{O}(H)$ (for $H$ wide enough), then $\mc{B}\hat{g}$ is given by the Laplace transform $\mc{L}g$, interpreted as a hyperfunction supported by $\mathbb{R}^+$, and $g$ is indeed the Laplace transform of its Borel transform, as we explained in \cite{thom_irrational1}.

Note that for $g(w) = \frac{1}{h}( \mr{e}^{(\beta+h)w} - \mr{e}^{\beta w})$, then $\mc{B}g = \frac{1}{h}( \delta_{\beta+h} - \delta_{\beta})$ can be seen as a discrete derivative (an approximation of $-\delta'_{\beta}$).
We can obtain higher-order discrete derivatives, consider for example the function
\[
\frac{\mr{e}^{(\beta+h)w}-2\mr{e}^{\beta w} + \mr{e}^{(\beta-h)w}}{h^2} \underset{h\to 0}{\rightarrow} w^2\mr{e}^{\beta w};
\]
the Borel transform $\frac{1}{h^2}(\delta_{\beta+h} - 2 \delta_{\beta} + \delta_{\beta-h})$ of this function can be interpreted as a discrete derivation of order 2.

For a series $g$ supported on $R_{\alpha} = \mathbb{N} + \alpha^{-1} \mathbb{N}$, the density of points in $R_{\alpha}$ close to a point $\beta\in \mathbb{R}^+$ grows linearly in $\beta$.
With $n$ points close together, we can cook up an approximation of the derivative of order $n$ so it seems reasonable to expect that the order of the distribution $\mc{B}g$ around the point $\beta\in \mathbb{R}^+$ grows linearly.
In \cite{thom_irrational1}, we proved that conditions of growth for the distribution $\mc{B}g$ corresponds to the shape of the neighborhood $H$ where $g$ is bounded.
The condition for the order of $\mc{B}g$ to grow linearly corresponds for the neighborhood $H$ to be a logarithmic neighborhood (see the body of the article for the definition).

I should mention that discrete derivatives have been used and studied for a long time in numerical analysis, but I didn't find anything in these works regarding the problems treated here.

One word about notation : we will write $ \langle D, \varphi \rangle = D(\varphi)$ for the evaluation of a distribution $D$ (a sum of diracs, or a hyperfunction) on a test function $\varphi$.
When $\varphi$ is given explicitely as a function of the variable $t$ or $p$, we will indulge in the erroneous notation $ \langle D, \varphi(t) \rangle$ : for example $ \langle D, \mr{e}^{tw} \rangle$ is equal to $D(e_w)$ where $e_w(t) = \mr{e}^{tw}$.
The variable of the test functions will always be written $t$ when it is real, or $p$ when complex.

\subsection{Results}

In this article, we will begin in section \ref{sec_definitions} by introducing irrational series and their various notions of convergence.
All these notions of convergence are equivalent for irrational series, but we try to define and study them as independently as possible, to simplify potential generalizations.
In the context of irrational series, the most complicated results of this part were proven in \cite{thom_irrational1}.

The notion of convergence studied are evanescent summation, summation by packages and diagonal extraction of integration by parts (DIPP for short).
DIPP is the most well-behaved (in fact we prove this one first, the other follow from it), but it is also the least practical.
Summation by packages is the most intuitive one, but it leads to some non-trivial considerations. We will introduce two flavours, strong and weak summation by packages, which are not a priori equivalent (although one cannot help but think that they should be, at least for irrational series).
Evanescent summation can be considered a more practical reformulation of DIPP, which is easily shown to be equivalent to weak summation by packages.

In the rest of the article, we try to find a more explicit form for the summation by packages, that is, with simple packages.
The packages we want to consider are Vandermonde packages, associated to Vandermonde distributions.
The Vandermonde distribution $\Delta_{R_n}$ associated to a finite set $R_n$ of cardinality $N_n+1$ is the only sum of diracs $\Delta_{R_n} = \sum_{\beta\in R_n} b_{\beta} \delta_{\beta}$ which approximates the $N_n$-th derivative (which amount to solving a Vandermonde system, hence the name\:; see the definition in the article).
The corresponding Vandermonde package is $\langle \Delta_{R_n}, \mr{e}^{tw} \rangle = \sum_{\beta\in R_n} b_{\beta} \mr{e}^{\beta w}$.

The main theorem of this article is

\begin{thmref}{Theorem}{\ref{thm_summation_by_packages}}
Consider an irrational series $g(w) = \sum_{\beta\in R} a_{\beta} \mr{e}^{\beta w}$ supported on a closed discrete set $R\subset \mathbb{R}^+$, defined and bounded in a logarithmic half-plane $H_{a,k}$.
We will suppose that $R$ has linear density : there exist constants $\mu,\nu$ such that for any interval $I$ of length $L$, $\# (R\cap I) \leq \mu\: L\: \mr{sup}(I) + \nu\: L$.

There exist finite sets $R_n$ with $\mr{inf}(R_n) = \beta_n$, $\#R_n = N_n+1$, coefficients $b_n\in \mathbb{C}$, a radius $r\in ]0,1[$ and a constant $c\in \mathbb{R}$ such that 
\[
\begin{aligned}
&g(w) = \sum_n b_n \langle \Delta_{R_n}, \mr{e}^{tw} \rangle,\\
& \sum_n |b_n| r^{\beta_n} < \infty,\\
& N_n \leq (\mu + 2k) \beta_n + c.
\end{aligned}
\]
\end{thmref}

These conditions imply that the series $g(w) = \sum b_n \langle \Delta_{R_n}, \mr{e}^{tw} \rangle$ converges by packages in a logarithmic neighborhood $H_{a',k'}$ where $k' = \mu + 2k$.
Since $k'>k$, this neighborhood is smaller than $H_{a,k}$.
See section \ref{sec_better_decompositions} for a discussion regarding the difficulties to obtain a sum that converges in $H_{a,k}$ and not on some smaller $H_{a',k'}$.

In section \ref{sec_distributions} we study discrete distributions and in particular decompositions $D = \sum b_n \Delta_{R_n}$.
The section \ref{sec_summation} contains the proof of Theorem \ref{thm_summation_by_packages} and some applications.

Appendix \ref{appendix_logarithmic} proves the equivalence between different definitions of logarithmic neighborhoods.

\subsection{Relation with transseries}

Irrational series are a particular case of transseries (sometimes called generalized power series), which are sums of transmonomials (for example, $(\mr{log}(z))^{\alpha}z^{\beta}\mr{e}^{\gamma z}$ is such a transmonomial).
Transseries seem to have gained interest thanks to Dulac's theorem, who found that the monodromy $f: (\mathbb{R}^+,0) \rightarrow (\mathbb{R}^+,0)$ of a polycycle for some vector field is a transseries.
More precisely, the assymptotic development $\hat{f}$ of $f(x)$ as $x\to 0$ is a transseries ; this detail has its importance since convergence for transseries is anything but a simple matter, and information on the formal series $\hat{f}$ is not easily converted to informations for the function $f$.
In fact, Dulac's work originally overlooked these difficulties, and we needed to wait for the works of Ilyashenko \cite{ilyashenko_limit_cycles} and Écalle, Martinet, Moussu, Ramis \cite{emmr_cycles_limites} to complete the proof.
See \cite{ilyashenko_centennial} for more details on this story.

Studies about transseries seem to focus mainly on the formal transseries and not their convergence in some complex domain.
Convergence problems are not trivial, even for the simplest transseries $\hat{f}(z) = \sum_{\beta\in R} a_{\beta} z^{\beta} = \sum_{\beta} a_{\beta} \mr{e}^{\beta w}$, where $R$ is a closed discrete subset of $\mathbb{R}^+$.
Informally, the questions we are interested in are\::
\begin{itemize}
\item[-] Is $\hat{f}$ the formal development (or assymptotic development) of a function $f$\:?
\item[-] Is the application $f \mapsto \hat{f}$ injective ?
\item[-] Can we compute $f$ from $\hat{f}$ (explicitely if possible) ? Or the slightly different question: do we have a resummation procedure $\hat{f} \mapsto f$, where $f$ is some function with formal development $\hat{f}$ ?
\end{itemize}
Of course, these questions depend on which functions $f$ we consider (germs of real functions, holomorphic functions in a neighborhood of the origin, maybe multivalued...)

Questions about convergence of transseries where mainly studied by Écalle, see \cite{ecalle_growth_scale}.
The resummation procedure is that of Borel resummation (modified with the accelero-summation for more complex transseries); let us explain it on a simple example.

\begin{ex}
\label{ex_resummation}
Consider the formal sum $\hat{f}(x) = \sum_{n\geq 0} n! x^{n+1}$.
For convenience, we will consider it near the infinity in the variable $p=1/x$ as $\hat{h}(p) = \sum \frac{n!}{p^{n+1}}$, and consider $p$ as the variable in the Borel plane.
Since the variables $w$ of space and $p$ of the Borel plane play a symmetric role, the way we look at it will not change the computations.
Seen as a formal sum of hyperfunctions, $\hat{h}$ corresponds to the sum of distributions $\sum_n (-1)^n \delta_0^{(n)}$.
When applied to a general test function $\varphi$ analytic in $p$, this sum does not converge ; but it does when applied to a function of exponential growth.
The Laplace transform of $\hat{h}$ is obtained applying this distribution to $\varphi(p) = \mr{e}^{p w}$, we get a convergent function
\[
g(w) := \langle \hat{h}, \mr{e}^{pw}\rangle = \sum_{n\geq 0} w^n = \frac{1}{1-w}.
\]
A priori, $g$ is only defined on $\{ |w| < 1\}$, but we can look at its analytic continuation to see it as a germ at infinity, written
\[
g(w) = \frac{-w^{-1}}{1-w^{-1}} = - \sum_{n\geq 1} w^{-n}.
\]
This sum converges normally in the set $\{\mr{Re}(w) < -a \}$ for any $a>1$, so we can consider its Laplace transform $h = \mc{L}g$.
If $\hat{h}$ were convergent (with exponential growth), $g$ would be defined on the whole of $\mathbb{C}$ and we would have $h = \hat{h}$.
The Borel resummation of $\hat{f}(x)$ is by definition $h(1/x)$ (wherever it is defined).

The Borel transform of $\hat{h}$ is $\hat{g} = \mc{B}\hat{h}$, it is obtained considering $\hat{h}$ as a formal sum of hyperfunctions and applying each one to the test function $\varphi(p) = \mr{e}^{pw}$.
The Borel resummation of $\hat{h}$ is obtain by looking at the analytic continuation $g$ of $\hat{g}$ (when $\hat{g}$ converges) and taking its Laplace transform $h=\mc{L}g$ (when $g$ has exponential growth).
\end{ex}

This is the usual resummation procedure, and we can show that the result does converge for some classes of transseries.
The question of the injectivity of $f \mapsto \hat{f}$ depends a lot on the subset $U\subset \mathbb{C}$ where $f$ is defined.
A classical argument to deal with this question is to consider an injective mapping $\Phi: \mathbb{H} \rightarrow U$, so that $f\circ \Phi$ is defined on $\mathbb{H} = \{\mr{Re}(w) < 0\}$, and use Phragmén-Lindelöf principle for $f\circ \Phi$ (see \cite{ilyashenko_centennial}), or classical Borel-Laplace transform on $\mathbb{H}$ (see \cite[Propositions 3.11 and 3.15]{ecalle_small_denominators}).

\subsection{Irrational series in the works of Écalle}

The specific case of transseries $\hat{f}(z) = \sum_{\beta\in R} a_{\beta} z^{\beta}$ (or rather, $\hat{g}(w) = \sum a_{\beta} \mr{e}^{\beta w}$) already appeared in the works of Écalle, namely in \cite{ecalle_small_denominators}.
Although he only considered series indexed by a semigroup $R$, most of his results should easily be adaptable for a general closed discrete $R$ with linear density.
Let us explain his definitions here, and then compare his results with those explained here to better understand what was already known to him (or might have been), and what is certainly new.

He introduced in this setting the compensators $z^{\beta_0,\ldots,\beta_n}$, which are the same as what we defined as Vandermonde packages
\[
z^{\beta_0,\ldots,\beta_n} = \langle \Delta_{\{\beta_0,\ldots,\beta_n\}}, z^t \rangle = \langle \Delta_{\{\beta_0,\ldots,\beta_n\}}, \mr{e}^{tw} \rangle.
\]
In his words, logarithmic neighborhoods $H$ become spiralling neighborhoods of the origin (he preferred to look at functions $g(w)$ as multivalued functions $f(z)$ defined near the origin).
Écalle defines seriable functions $\mr{Ser}(R,H)$ in the logarithmic neighborhood $H$ as the closure in $\mc{O}(H)$ of finite sums $\sum_{\beta\in R} a_{\beta} \mr{e}^{\beta w}$ for uniform convergence in $H$, and $\mr{Ser}(H) = \cup_R \mr{Ser}(R,H)$.
He also introduced compensable functions $\mr{Comp}(R,H)$ which are normally convergent sums of compensators $z^{\beta_0,\ldots,\beta_n}$, with $\beta_i\in R$, and $\mr{Comp}(H) = \cup_R \mr{Comp}(R,H)$.
In our terms, $\mr{Comp}(H)$ correspond to functions which are summable by packages, with Vandermonde packages.
In the present work, we only work with series $\sum a_{\beta} \mr{e}^{\beta w}$, while Écalle works with more general series $\sum a_{k,\beta} z^k\mr{e}^{\beta w}$ ; our choice is meant to avoid unnecessary complications, studying both types of series is strictly equivalent as we explain in section \ref{sec_degenerated}, but working with multisets from the beginning may obfuscate the intuition that we hope to convey.

The results of Écalle with respect to this matter are Propositions 3.11 and 3.15 (still in \cite{ecalle_small_denominators}).
These show that the formal development $g \mapsto \hat{g}$ is injective on $\mr{Ser}(H)$, and that in this case, a variant of the Borel resummation allows to recover $g$ from $\hat{g}$.

It is still unclear to me what is the actual scope of the proof of these propositions.
The argument "changing variable to obtain a function defined on $\mb{H}$ and using the classical Borel-Laplace transform" seems to be a good strategy in general, but the lack of details in the proofs didn't help to understand what could easily be deduced from it (Laplace transform depends non-trivially on changes of coordinates).

Some results which weren't clearly stated there, might have already been understood by Écalle, but are a consequence of our works \cite{thom_irrational1} and [this article] :
\begin{enumerate}
\item $\mr{Ser}(H)$ is exactly the set of bounded holomorphic functions in $H$ whose formal development is a formal sum $\sum a_{k,\beta} w^k\mr{e}^{\beta w}$. These are the irrational series defined here when monomials have only exponentials.
\item Écalle's results also work when $R$ is not a semigroup, but a closed discrete subset of $\mathbb{R}^+$.
\end{enumerate}
What is certainly new :
\begin{enumerate}
\item Introducing new summation methods and generalizing compensation as summation by packages.
\item $\cup_H\mr{Ser}(H) = \cup_H\mr{Comp}(H)$ : this is a consequence of our Theorem \ref{thm_summation_by_packages}.
\end{enumerate}

\subsection{Conclusion}

One question that has not been answered in this paper is whether it was really necessary to introduce a new name ("irrational series") to stand for the objects studied : after all, we saw that they are exactly seriable functions in this context.
Let us take a step back and see how the present works can help understand transseries.

Problems about convergence of transseries are quite complex, between divergence, convergence by packages and resummation.
The idea I hope to convey in this article is that the notion of convergence by packages introduced here is fundamentally different than Borel resummation.
With Borel resummation, we give a value to a divergent series (see Example \ref{ex_resummation}), whereas series which converge by packages should be seen as convergent series (this should be clear from Theorem \ref{thm_summation_by_packages}).
It seems worth studying first the simplest transseries, of the form $g(w) = \sum_{\beta\in R} a_{\beta} \mr{e}^{\beta w}$, when $R$ is a closed discrete subset of $\mathbb{R}^+$, and which are already convergent in some sense.

The simplest ones are those indexed by $R=\mathbb{N}$, that is, usual power series (also called entire series) : they converge normally on a straight half-plane.
Series indexed by any subset $R$ which also converge normally on a straight half-plane can be studied rather similarly (apart from the fact that entire series are $2i\pi$-periodic)\:: they correspond to holomorphic functions which are bounded on a half-plane and whose formal development is indexed by $R$.
Series indexed by $R = \mathbb{N} + \alpha^{-1} \mathbb{N}$ for some rational $\alpha$ are Puiseux series, they can be reduced to entire series through a change in the variable $w$.

Next on the list are series indexed by $R = \mathbb{N} + \alpha^{-1} \mathbb{N}$ with an irrational $\alpha$, which certainly deserve the name of irrational series.
This class of functions appears naturally in problems with small denominators, are easy to study through Laplace transform or summation by packages, and this is why I think they deserve a name of their own.
Series indexed by any subset $R$ which are bounded in a logarithmic half-plane are not more difficult to study and we did not give them a different name in this paper.

More general series $g(w)$ can be studied in a similar manner.
One property which was key in all of this is the property for the Laplace transform $\mc{L}g$ (seen as a hyperfunction) to have locally finite order.
As explained in \cite{thom_irrational1}, any growth property for $\mc{L}g$ is in fact a property on the shape of the neighborhood $H$ of $-\infty$ on which $g$ is bounded.
If $\mc{L}g$ has locally finite order, we should be able to do a DIPP, and the "value" of the hyperfunction $\mc{L}g$ at a point $t\in \mathbb{R}^+$ only depends on the coefficients $a_{\beta}$ with $\beta\leq t$ : this is what characterizes seriability in the terminology of Écalle.
I think we can use the term "seriability" more generally for this kind of series.

We can then wonder whether series indexed by a closed discrete subset are always seriable (that is, bounded in a neighborhood $H$ which satisfies the property of locally finite order).
As it happens, finding the biggest neighborhood $H$ on which a series is bounded, provided it is bounded in some neighborhood, is not an easy task.
Note that even for series indexed by $\mathbb{N} + \alpha^{-1} \mathbb{N}$, we did not prove that if it converges in some neighborhood, then it converges in a logarithmic neighborhood.
In fact, even for series indexed by $\mathbb{N}$, we had to use $2i\pi$-periodicity to conclude that they converges normally.
The tools developped here might need further polishing before being able to solve this kind of problems.

\section{Definitions}
\label{sec_definitions}
\subsection{Irrational series}

A formal irrational series in one variable $z$ is a sum $\hat{f}(z) = \sum_{\beta\in R} a_{\beta} z^{\beta}$ where $R$ is a closed discrete subset of $\mathbb{R}^+$, and $a_{\beta}\in \mathbb{C}$ are complex coefficients.

The terms $z^{\beta}$ have non-trivial monodromy as $z$ circles around the origin, so we study these series by lifting them to the universal covering of $\mb{D}^*$.
Following \cite{thom_irrational1}, we say that a subset $H\subset \mathbb{C}$ is a neighborhood of $-\infty$ if for each $\theta< \pi/2$, it contains a left-handed cone of opening $2 \theta$ symmetric about $\mathbb{R}^-$.
A neighborhood $H$ is called logarithmic if it is (or, contains one) of the form 
\[
H_{a,k} = \{ x+iy \in \mathbb{C} \:|\: x + \frac{k}{2}\:\mr{log}(x^2+y^2) < a \}
\]
for some $a\in \mathbb{R}$ and $k>0$.
We gave in \cite[Appendix A]{thom_irrational1} several definitions of logarithmic neighborhoods, this is one of them.

The covering application $H \rightarrow \mb{D}^*$ is just $w\in H \mapsto z=\mr{e}^w$, so formal irrational series correspond to sums $\hat{g}(w) = \sum_{\beta\in R} a_{\beta}\mr{e}^{\beta w}$.
We say that a holomorphic function $g\in \mc{O}(H)$ admits $\hat{g}$ as formal development if for each $\beta_0\in R$, 
\[
g(w) - \sum_{\beta < \beta_0} a_{\beta}\mr{e}^{\beta w} = O(\mr{e}^{\beta_0 w})
\]
as $\mr{Re}(w) \rightarrow -\infty$.

\begin{df}
We say that the formal irrational series $\hat{g}(w) = \sum_{\beta\in R} a_{\beta} \mr{e}^{\beta w}$ converges normally on the neighborhood $H$ if $\sum_{\beta\in R} |a_{\beta}| \mr{e}^{\beta r} < \infty$, where $r = \mr{sup}_{w\in H} \mr{Re}(w)$.
\end{df}

\begin{ex}
The exponent set $R$ we are mainly interested in is $R = R_{\alpha} = \mathbb{N} + \alpha^{-1} \mathbb{N}$, and consists of points $\beta = p + \frac{q}{\alpha}$.
Note that if $\mr{Re}(w) <0$,
\[
\begin{aligned}
\sum_{\beta\in R_{\alpha}} \mr{e}^{\beta w} &= \sum_{q=0}^\infty \sum_{p=0}^\infty \mr{e}^{\frac{q}{\alpha}w}\mr{e}^{pw}\\
&= \sum_{q=0}^{\infty} \frac{\mr{e}^{\frac{q}{\alpha}w}}{1-\mr{e}^{w}}\\
&= \frac{1}{(1-\mr{e}^{\frac{w}{\alpha}})(1-\mr{e}^{w})}.
\end{aligned}
\]
It follows that the domain of normal convergence of a series $\sum_{\beta\in R_{\alpha}} a_{\beta} \mr{e}^{\beta w}$ is the half-plane
\[
\left\{ \mr{Re}(w) < \mr{lim}\:\mr{inf}\: \frac{-1}{\beta}\mr{log}|a_{\beta}| \right\},
\]
where we understand $\mr{log}(0)$ as beeing $-\infty$.
\end{ex}

\begin{df}
We say that a formal irrational series $\hat{g}$ converges in the logarithmic neighborhood $H$ if there exists a bounded holomorphic function $g$ on $H$ which admits $\hat{g}$ as formal development.

We call irrational series a bounded holomorphic function $g$ on a logarithmic half-plane $H$ whose formal development is a formal irrational series.
\end{df}

\subsection{Convergence as a hyperfunction}

Note that we can write formally $\sum_{\beta\in R} a_{\beta} \mr{e}^{\beta w} = \langle D, \mr{e}^{tw} \rangle$ where $D=\sum_{\beta\in R} a_{\beta} \delta_{\beta}$ is to be understood informally as a distribution of the variable $t$.

The correct context for this point of view is that of hyperfunctions in the sense of Sato (cf. \cite{sato_hyperfunctions1}, \cite{sato_hyperfunctions2}) (see also \cite{morimoto_hyperfunctions} for an introductory book): the distribution $\delta_{\beta}$ corresponds to the hyperfunction $\frac{1}{p-\beta}\in \mc{O}(\mathbb{C}\setminus \mathbb{R}^+) / \mc{O}(\mathbb{C})$.
This point of view has been studied in \cite{thom_irrational1}, where we proved that the Laplace transform realises an equivalence between bounded holomorphic functions in a logarithmic half-plane $H$ and some class $\ms{H}$ of hyperfunctions in $\mathbb{C}$ supported by $\mathbb{R}^+$.

From this equivalence, if the formal irrational series $\hat{g}(w) = \sum_{\beta\in R} a_{\beta} \mr{e}^{\beta w}$ converges in $H$ (denote by $g$ the sum), the hyperfunction $D(p)=\sum_{\beta\in R} \frac{a_{\beta}}{p-\beta}$ converges as a hyperfunction in $\ms{H}$, and is equal to the Laplace transform of $g$ : we have $D = \mc{L}g$.

Conversely, if there is a hyperfunction $D\in \ms{H}$ supported by $R$ which is equal to $\frac{a_{\beta}}{p-\beta}$ near each $\beta\in R$, the inverse Laplace transform $g = \mc{L}^{-1}D$ is a function in a logarithmic half-plane $H$ whose formal development is $\sum_{\beta\in R} a_{\beta} \mr{e}^{\beta w}$.

The sum $\sum_{\beta\in R} \frac{a_{\beta}}{p-\beta}$ does not converge normally, but we have some summation formulas which we will explain in the next paragraphs (see \cite{thom_irrational1} for more).

\subsection{Evanescent summation}

We say that a finite sum $\tilde{S}_ng=\sum b_i \mr{e}^{\beta_i w}$ is a $n$-th evanescent partial sum of a formal irrational series $\hat{g}$ if $\hat{g} - \tilde{S}_ng = o(\mr{e}^{nw})$ formally.
We say that a bounded holomorphic function $g$ in a logarithmic neighborhood $H$ is an evanescent summation of the formal series $\hat{g}$ if there exists for each $n\in \mathbb{N}$ an evanescent partial sum $\tilde{S}_ng$ such that the sequence $(\tilde{S}_ng)_n$ converges uniformly to $g$ in $H$.

In \cite{thom_irrational1}, we proved that each irrational series $g$ admits an evanescent summation\:; quite importantly, the summation that we found can be explicitely written in function of the formal development of $g$.

\begin{thm}
\label{thm_evanescent}
There exists an explicit way to associate to each formal irrational series $\hat{g}$ a sequence of evanescent partial sums $\tilde{S}_n \hat{g}$ so that whenever $g$ is an irrational series with formal development $\hat{g}$, the sequence $\tilde{S}_n\hat{g}$ converges uniformly to $g$ in a neighborhood of $-\infty$.
\end{thm}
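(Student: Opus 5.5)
We plan to build the evanescent partial sums out of the Laplace/hyperfunction picture of \cite{thom_irrational1}. Write $D=\sum_{\beta\in R} a_\beta \delta_\beta$ for the formal Borel transform of $\hat g$. When $g$ is an irrational series bounded in $H_{a,k}$, the Laplace transform $\mc{L}g$ is a hyperfunction in $\ms{H}$ supported by $R$, and it has locally finite order growing at most linearly: near $t$, its order is bounded by $Ct + C'$. The first step is to choose, for each $n$, a cutoff point $t_n\in\, ]n,n+1[$ lying in a gap of $R$. We also choose a smooth (or analytic-approximated) cutoff $\chi_n$ that equals $1$ on $[0,t_n-\epsilon]$ and $0$ beyond $t_n$. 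Since $R$ is closed and discrete, such a gap always exists.

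The explicit partial sum is defined by ``integrating by parts'' the truncated part. Using the linear order bound, we write $\mc{L}g$ on $[0,t_n]$ as $\partial_t^{m_n}F_n$ with $m_n\approx k t_n$, where $F_n$ is a continuous function on $[0,t_n]$. The key point is that $F_n$ is determined explicitly by the coefficients $a_\beta$ with $\beta\le t_n$: it is an iterated primitive of the finite sum of diracs, adjusted by polynomials. Next we replace $F_n$ by a discrete quantity. We set $\tilde S_n\hat g=\langle \Delta_n, \mr{e}^{tw}\rangle$, where $\Delta_n$ is the finite sum of diracs $\sum_{\beta\le t_n} a_\beta\delta_\beta$ corrected by Vandermonde-type combinations placed at points just beyond $n$. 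These corrections kill the boundary terms that the $m_n$-fold integration by parts produces at $t_n$. Because every added dirac sits at an exponent $>n$, we get $\hat g-\tilde S_n\hat g=o(\mr{e}^{nw})$ formally, and the whole construction depends only on $\hat g$, which gives the required explicitness.

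For convergence, we estimate $g-\tilde S_n\hat g=\langle \mc{L}g-\Delta_n,\mr{e}^{tw}\rangle$. This hyperfunction is supported on $[n,\infty)$ and is a derivative of order $O(n)$ of a bounded function. Integration by parts gives a bound
\[
|g(w)-\tilde S_n\hat g(w)|\lesssim \sum_{j\ge n} |w|^{C j}\, \mr{e}^{j\,\mr{Re}(w)}\, M^j ,
\]
and this is summable and tends to $0$ uniformly on a smaller logarithmic neighborhood $H_{a',k'}$, where $\mr{Re}(w)+k'\log|w|<a'$ makes $|w|^{C}\mr{e}^{\mr{Re}w}M<1/2$.

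The main obstacle is the second step: controlling the size of the Vandermonde corrections at the cutoff. Their coefficients involve inverse Vandermonde determinants, which can blow up when points of $R$ cluster. We will handle this with the linear density hypothesis, choosing $t_n$ and the correction points in a gap of size $\gtrsim 1/n$. This keeps the Vandermonde coefficients at most exponential in $n$, which is exactly what the shrinking $k\mapsto k'$ absorbs.
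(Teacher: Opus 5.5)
Your overall architecture is the paper's. Truncating $D$ at the cut and removing the border terms of the iterated integration by parts there is exactly the DIPP partial sum $\widetilde S_ng=\sum_{m\le n}\widetilde A_m\hat g$ of Proposition~\ref{prop_DIPP_summation}. The derivatives of diracs at the cut are then discretized (Lemma~\ref{lem_degenerated_distributions}), and the remainder is the tail of the DIPP. There are two genuine gaps, however.

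First, the analytic input you invoke is too weak. What is needed is a diagonal bound on the canonical primitives $I^jD(t)=\sum_{\beta\le t}a_\beta(t-\beta)^{j-1}/(j-1)!$, namely $\sum_j\int_{t_j}^{t_{j+1}}|I^jD|\,\mr{e}^{at}dt+\sum_j|(I^jD)(t_j)|\,\mr{e}^{at_j}<\infty$ for $t_j\approx j/k$. This is Proposition~\ref{prop_admissible} (Theorem~3 of \cite{thom_irrational1}) with Proposition~\ref{prop_DIPP_bounded}, and it is the real content behind Theorem~\ref{thm_evanescent}. ``Local order at most $Ct+C'$'', even with size bounds, does not give it. A local representation $\mc{L}g=\partial^mF$ fixes $F$ only up to a polynomial of degree $<m$, and $I^mD-F$ is built from the moments of $D$ on all of $[0,t]$, which no local information controls. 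If you instead ``adjust by polynomials'', the partial sums are no longer determined by $\hat g$. Likewise, the remainder is not an $O(n)$-th derivative of a bounded function on $[n,\infty)$, since the order is unbounded there. It is the DIPP tail, in which only the diagonal border terms $(I^{m+1}D)(t_{m+1})w^m\mr{e}^{t_{m+1}w}$ survive. Your displayed bound is exactly the summability of that tail, i.e.\ the estimate above.

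Second, your ``main obstacle'' is misdiagnosed, and the proposed remedy would fail. The individual border coefficients $(I^qD)(t_n)$, for $q$ below the local order, are not controlled by $\sup_H|g|$; only the partial sum as a whole is. Take $S$ to be $N+1$ points with spacing $1/n$, symmetric about $t_n$, so that $t_n$ sits in a gap of size $1/n$, with $N\le k\min S$ of order $kn$, and take $c$ of size $\mr{e}^{-|a|t_n}$. Then $g=c\langle\Delta_S,\mr{e}^{tw}\rangle$ is bounded on $H_{a,k}$ by Corollary~\ref{cor_majoration_vandermonde} and Lemma~\ref{lem_cle}. Yet $(I^2D)(t_n)$ is roughly $c\,(2n)^{N}$, which is super-exponential in $n$.

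Replacing $w^{q-1}\mr{e}^{t_nw}$ by $\langle\Delta_{S_q},\mr{e}^{tw}\rangle$, with $\#S_q=q$ points in an interval of length $L$, costs up to $2L\,|(I^qD)(t_n)|\sup|w^q\mr{e}^{tw}|$, and in general the error is of that size. So clusters of width $\gtrsim1/n$ leave an error that is not small. The right choice is the opposite of yours. Put the correction points within $\epsilon_n$ of the cut, with $\epsilon_n$ computed from the explicit numbers $(I^qD)(t_n)$, $q\le m_n$, so that the total error is at most $2^{-n}$ on $H_{a',k'}$. This is what Lemma~\ref{lem_degenerated_distributions} provides.

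The resulting dirac coefficients are enormous, but that is harmless. Indeed $|\langle\Delta_S,\mr{e}^{tw}\rangle|\le\sqrt2\sup_{\mr{Conv}(S)}|w^{N}\mr{e}^{tw}|$ whatever the spacing, and partial sums are never estimated term by term. Finally, Theorem~\ref{thm_evanescent} concerns an arbitrary closed discrete $R$, so linear density is not available; it only enters Theorem~\ref{thm_summation_by_packages}. It is not needed either, since the correction points need not lie in $R$.
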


This is \cite[Theorem 4]{thom_irrational1}.
An important consequence of this construction is the injectivity of the formal development.
As explained in the introduction, this can also be proved by more classical tools.

\begin{cor}
\label{cor_injectivity}
Two irrational series $g_1, g_2$ with the same formal development are equal.
\end{cor}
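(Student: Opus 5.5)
Corollary \ref{cor_injectivity} follows directly from Theorem \ref{thm_evanescent}. The point is that the sequence of evanescent partial sums $\tilde{S}_n\hat{g}$ given by that theorem is built explicitly from the formal development $\hat{g}$ alone. It does not depend on any choice of function realising $\hat{g}$.

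Let $g_1$ and $g_2$ be irrational series with the same formal development $\hat{g}$. Then $g_1$ is bounded and holomorphic on a logarithmic half-plane $H_1$, and $g_2$ on a logarithmic half-plane $H_2$. Theorem \ref{thm_evanescent} applied to $g_1$ gives a neighborhood $U_1$ of $-\infty$ on which $\tilde{S}_n\hat{g}$ converges uniformly to $g_1$. Applied to $g_2$ with the same formal series, it gives a neighborhood $U_2$ on which the same sequence $\tilde{S}_n\hat{g}$ converges uniformly to $g_2$.

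Next, $U_1\cap U_2$ is still a neighborhood of $-\infty$. Indeed, for each $\theta<\pi/2$, each $U_i$ contains a left-handed cone of opening $2\theta$ symmetric about $\mathbb{R}^-$. Two such cones with the same opening are nested, so the intersection contains the smaller one. In particular $U_1\cap U_2$ is nonempty and has nonempty interior.

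On $U_1\cap U_2$ the single sequence $\tilde{S}_n\hat{g}$ converges pointwise to both $g_1$ and $g_2$. By uniqueness of limits, $g_1=g_2$ on $U_1\cap U_2$. Both functions are holomorphic, so the identity principle extends the equality to the connected component of $H_1\cap H_2$ containing $U_1\cap U_2$. The region $H_1\cap H_2$ is an intersection of two logarithmic half-planes; in the sense of germs at $-\infty$ it contains a logarithmic half-plane, which is connected. So $g_1$ and $g_2$ coincide as germs, and on their common domain.

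The only real content is the point already emphasised in the paper: the summation procedure in Theorem \ref{thm_evanescent} is a function of $\hat{g}$ only. Once that is granted, there is no genuine obstacle. The mild technical point is checking that the intersection of the two convergence neighborhoods is open and nonempty so the identity principle applies, and the cone condition in the definition of a neighborhood of $-\infty$ settles this.
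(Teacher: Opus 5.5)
Your proof is correct and follows the paper's own argument: the evanescent partial sums of Theorem~\ref{thm_evanescent} depend only on $\hat{g}$. They therefore converge to both $g_1$ and $g_2$ near $-\infty$, which forces equality there, and analytic continuation finishes. One small correction: $H_{a,k}$ need not be connected (for $k=1$, $a=-10$ the line $\mr{Re}(w)=-1$ misses it and cuts off a small disc around $w=0$), so the equality only propagates to the component containing the neighborhood of $-\infty$ --- which is all the paper's ``analytic continuation'' step gives as well.
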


\begin{proof}
Each $g_i$ is the limit of its evanescent partial sums $\tilde{S}_ng_i$, and these partial sums only depend on the formal development $\hat{g}_1 = \hat{g}_2$.
We obtain the equality of $g_1$ and $g_2$ in some neighborhood of $-\infty$ and conclude by analytic continuation.
\end{proof}

\subsection{Summation by packages}

Fix a closed discrete subset $R\subset \mathbb{R}^+$ and a formal irrational series $\hat{g}$ supported on $R$.

Define a system of packages supported by $R$ as a family $\ms{P} = \{ P_n, n\in \mathbb{N}\}$ where each $P_n$ is a finite subset of $R$ and $R = \bigcup_{n\in \mathbb{N}} P_n$ (some packages can overlap).

We say that a formal sum $\hat{g}_{\ms{P}}(w) = \sum_{n\in \mathbb{N}}\sum_{\beta\in P_n} a_{n,\beta} \mr{e}^{\beta w}$ is a decomposition of $\hat{g}$ along $\ms{P}$ if for each $\beta\in R$, $\sum_{n\in \mathbb{N}} a_{n,\beta}=a_{\beta}$.
Such a formal sum is said to converge normally by packages on a subset $H\subset \mathbb{C}$ if
\[
\sum_{n\in \mathbb{N}} \sup_{w\in H} \left| \sum_{\beta\in P_n} a_{n,\beta} \mr{e}^{\beta w} \right| < \infty.
\]
The formal series $\hat{g}$ will be said to converges normally by packages on $H$ ("converges by packages" for short) if there exists a decomposition of $\hat{g}$ along $\ms{P}$ which converges by packages on $H$.
We say that $\hat{g}$ converges (strongly) by packages (or that $\hat{g}$ can be summated by packages) if it converges normally by packages on some logarithmic neighborhood $H$ of $-\infty$.

Let us introduce a weaker version of convergence by packages : we say that the formal irrational series $\hat{g}$ supported by $R$ converges weakly by packages if there is a closed discrete subset $\tilde{R }\supset R$, a system of packages $\ms{P}$ supported by $\tilde{R }$ and a decomposition $\hat{g}_{\ms{P}}$ of $\hat{g}$ along $\ms{P}$ which converges normally by packages on a logarithmic neighborhood.

The notion of evanescent summation is equivalent to that of weak summation by packages.

\begin{prop}
A formal irrational series $\hat{g}$ admits a weak summation by packages if and only if it admits an evanescent summation.
\end{prop}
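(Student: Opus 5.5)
The equivalence follows by telescoping in both directions.

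\textbf{Evanescent summation implies weak summation by packages.} Let $g$ be an evanescent summation of $\hat{g}$, with evanescent partial sums $\tilde{S}_n g$ converging uniformly to $g$ on the logarithmic neighborhood $H$. Passing to a subsequence, I may assume
\[
\sup_{w\in H} |\tilde{S}_{n+1}g - \tilde{S}_n g| \leq 2^{-n}.
\]
Let $\tilde{R}$ be the union of $R$ with all exponents appearing in the $\tilde{S}_n g$. It is closed discrete: the condition $\hat{g} - \tilde{S}_n g = o(\mr{e}^{nw})$ forces every exponent of $\tilde{S}_n g$ below $n$ to lie in $R$. The only possible exception is an exponent $\geq n$ that is a finite sum of terms cancelling below $n$, and such exponents are finitely many for each $n$. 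Only finitely many such exponents are needed below any bound, since every exponent of $\tilde{S}_m g$ outside $R$ is $\geq m$.

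Take the packages $P_0 = \mr{supp}(\tilde{S}_0 g)$ and $P_{n+1} = \mr{supp}(\tilde{S}_{n+1}g) \cup \mr{supp}(\tilde{S}_n g)$. The package contents are $\tilde{S}_0 g$ and $\tilde{S}_{n+1}g - \tilde{S}_n g$. Normal convergence by packages is exactly the bound above. To see the decomposition property, fix $\beta\in\tilde{R}$. The coefficients along the packages telescope to the coefficient of $\mr{e}^{\beta w}$ in $\tilde{S}_m g$ for large $m$. This equals $a_\beta$ as soon as $m > \beta$, because $\hat{g} - \tilde{S}_m g = o(\mr{e}^{mw})$ formally, with $a_\beta = 0$ when $\beta \notin R$. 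Finally, $R$ may not be covered by the packages if some $a_\beta = 0$; in that case I add singleton packages with zero coefficients.

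\textbf{Weak summation by packages implies evanescent summation.} Let $\sum_n Q_n(w)$ be a decomposition along $\ms{P}$ on $\tilde{R}\supset R$ that converges normally by packages on the logarithmic neighborhood $H$, where $Q_n = \sum_{\beta\in P_n} a_{n,\beta}\mr{e}^{\beta w}$. The sum $g = \sum_n Q_n$ converges uniformly, so it is bounded and holomorphic on $H$. For each $N$, choose $M_N$ so large that the tail satisfies $\sup_H|\sum_{n>M_N} Q_n| < 2^{-N}$. The partial sums $\sum_{n\le M_N} Q_n$ then converge uniformly to $g$, but they are not yet evanescent.

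This is the main obstacle. For $\beta<N$, the coefficients $\sum_{n\le M}a_{n,\beta}$ must equal $a_\beta$ exactly, and one must also make sure that $g$ has formal development $\hat{g}$. Here are the two points I would handle.

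First, the set $\tilde{R}\cap[0,N]$ is finite, and for each $\beta$ in it the series $\sum_n a_{n,\beta}$ is a formal sum. I would require each $\beta$ to lie in only finitely many packages, which I would read into the definition (as is natural for a decomposition). If the definition does not allow this, I would instead use normal convergence and Cauchy estimates along rays to show that $\sum_n a_{n,\beta}$ converges absolutely. Once $\beta$ meets only finitely many packages, taking $M_N$ large makes the coefficients below $N$ exact.

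Second, the tail $\sum_{n>M_N} Q_n$ must be $O(\mr{e}^{Nw})$ as $\mr{Re}(w)\to-\infty$. Since all its exponents are $\geq N$, I would deduce this from its uniform bound on $H$ by a maximum-principle or Phragmén--Lindelöf argument applied to $\mr{e}^{-Nw}\sum_{n>M_N} Q_n$, after shrinking $H$ to a translate of itself. This yields the formal development of $g$ and makes the truncations evanescent partial sums. It is the delicate analytic step.
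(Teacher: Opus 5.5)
Your telescoping in both directions is the paper's argument. Your direction from evanescent summation to weak summation by packages is more careful than the paper's. Uniform convergence of $(\tilde S_n g)$ alone does not give $\sum_n \sup_H|A_n|<\infty$, although the paper just invokes ``Cauchy's criteria''. Your subsequence with $\sup_H|\tilde S_{n+1}g-\tilde S_ng|\le 2^{-n}$ is exactly the missing step. Checking that $\tilde R$ is closed discrete and covers $R$ is also a genuine addition.

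The converse direction, however, is built around an obstacle that does not exist, and the step you call delicate is false as formulated. Being an $N$-th evanescent partial sum is a purely formal condition: $\hat g-\tilde S_Ng=o(\mr{e}^{Nw})$ formally, i.e.\ the coefficients agree with those of $\hat g$ at every exponent $\le N$. An evanescent summation only asks that such sums converge uniformly to $g$ on $H$; nothing requires you to prove that $g$ has formal development $\hat g$. So once the coefficients below $N$ are exact, the truncations $\sum_{n\le M_N}Q_n$ are already evanescent partial sums converging uniformly to $g$, and you are done. This is all the paper does.

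Your second point must be dropped, and not only because it is superfluous. Boundedness of $\mr{e}^{-Nw}\sum_{n>M_N}Q_n$ on a smaller logarithmic half-plane $H_{a',k}$ does not follow from the bound on $H=H_{a,k}$. Take a Vandermonde package on $m+1$ points clustered near $\beta>N$, with $m\approx k\beta$. By Lemma \ref{lem_cle} and Corollary \ref{cor_majoration_vandermonde}, it is bounded on $H_{a,k}$ by roughly $\sqrt{2}\,\mr{e}^{a\beta}$, and it approximates $w^m\mr{e}^{\beta w}$. But on $\partial H_{a',k}$, where $|w|^k=\mr{e}^{a'-\mr{Re}\,w}$, one has $|w^m\mr{e}^{(\beta-N)w}|=\mr{e}^{ma'/k}\,\mr{e}^{(\beta-N-m/k)\mr{Re}\,w}$. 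This blows up as $\mr{Re}\,w\to-\infty$ whenever $m/k>\beta-N$, e.g.\ for $m\approx k\beta$ and $N\ge 1$. A single finite package is $O(\mr{e}^{Nw})$, but with a constant that its supremum on $H$ does not control. A normally convergent sum of such packages with $\beta_j\to\infty$ (so the exponent set stays closed discrete) and clusters shrinking fast enough makes $\mr{e}^{-Nw}\sum_j Q_j$ unbounded on $H_{a',k}$. No maximum principle or Phragm\'en--Lindel\"of argument can rescue this.

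Your fallback for packages that overlap infinitely often does not work either. Convergence of $\sum_n a_{n,\beta}$ is already part of the definition of a decomposition. Absolute convergence still never makes the partial sums $c_{M,\beta}=\sum_{n\le M}a_{n,\beta}$ exactly equal to $a_\beta$. The right repair is a finite correction: add $\sum_{\beta\in\tilde R,\,\beta\le N}(a_\beta-c_{M,\beta})\mr{e}^{\beta w}$ to the truncation, with $a_\beta=0$ off $R$. This sum has finitely many nonnegative exponents, and $H$ lies in a half-plane $\{\mr{Re}\,w<c\}$, so it is uniformly small on $H$ for $M$ large. The paper tacitly assumes finite overlap at this point.
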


\begin{proof}
If $\hat{g}$ admits a weak summation by packages $\hat{g}_{\ms{P}}$, then the finite sums $\tilde{S}_{m}g = \sum_{i\leq n} \sum_{\beta\in P_i} a_{i,\beta}\mr{e}^{\beta w}$ provide evanescent partial sums of $\hat{g}$ which converge uniformly to $\hat{g}_{\ms{P}}$ (the order $m$ up to which $\hat{g}$ and $\tilde{S}_mg$ coincide is not equal to $n$, but we only need it to tend to infinity when $n\to \infty$).

Conversely, if $(\tilde{S}_ng)_n$ is a sequence of evanescent partial sums converging to a function $g$ in some logarithmic half-plane $H$, we can define $A_n(w) = \tilde{S}_ng(w) - \tilde{S}_{n-1}g(w)$ for $n\geq 1$, $A_0(w) = \tilde{S}_0(w)$ and $P_n = \mr{supp}(\mc{L}A_n)$.
By Cauchy's criteria, the sum $\sum_n A_n$ converges normally, and its sum is $\sum_n A_n(w) = g(w)$.
This is to say that the formal series $\hat{g}$ converges weakly along the system of packages $\ms{P} = \{ P_n \}_{n}$ and its sum is $\hat{g}_{\ms{P}}(w) = g(w)$.
\end{proof}

In particular, Theorem \ref{thm_evanescent} can be restated :

\begin{thm}
\label{thm_weak_summation}
An irrational series can be weakly summated by packages.
\end{thm}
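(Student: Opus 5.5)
The plan is to combine Theorem \ref{thm_evanescent} with the equivalence proposition just proved. Let $g$ be an irrational series, that is, a bounded holomorphic function on a logarithmic half-plane $H$ whose formal development $\hat{g}$ is a formal irrational series supported on some closed discrete $R$. By Theorem \ref{thm_evanescent}, we obtain explicit evanescent partial sums $\tilde{S}_n\hat{g}$ such that $\hat{g}-\tilde{S}_n\hat{g}=o(\mr{e}^{nw})$ formally. These sums converge uniformly to $g$ on some neighborhood of $-\infty$.

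The first step is to make sure this neighborhood can be taken logarithmic, since the definition of weak summation by packages requires normal convergence on a logarithmic neighborhood. I expect this is exactly what the construction in \cite{thom_irrational1} provides: the convergence takes place on a possibly smaller logarithmic half-plane $H_{a',k'}\subset H$. If the statement there only gives "a neighborhood of $-\infty$", I would go back to the proof of \cite[Theorem 4]{thom_irrational1}. The estimates there come from the DIPP applied to the Laplace transform $\mc{L}g\in\ms{H}$, whose order grows linearly. Such linear-order estimates naturally yield uniform bounds on some $H_{a',k'}$, so I expect this check to be bookkeeping rather than a genuine difficulty. It is nonetheless the one point requiring care.

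Once this is done, $g$ admits an evanescent summation in the sense defined above, and the converse direction of the preceding proposition applies. Set $A_0=\tilde{S}_0\hat{g}$ and $A_n=\tilde{S}_n\hat{g}-\tilde{S}_{n-1}\hat{g}$, take packages $P_n=\mr{supp}(\mc{L}A_n)$, and let $\tilde{R}=\bigcup_n P_n$. For the normal convergence, one extracts a subsequence so that $\sup_H|\tilde{S}_{n}\hat{g}-g|\le 2^{-n}$, which gives $\sum_n\sup_H|A_n|<\infty$.

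It remains to check that $\tilde{R}$ is closed and discrete. Each $P_n$ is finite. Moreover, the coefficients of $A_n$ below exponent $n-1$ (along the subsequence) cancel up to the terms of $\hat{g}$ itself, because consecutive partial sums both agree with $\hat{g}$ to order $o(\mr{e}^{(n-1)w})$. Hence only finitely many packages contribute exponents outside $R$ below any given bound. The one step needing care is to also discard from each $A_n$ the extraneous small exponents whose coefficients vanish, so that $\tilde{R}\cap[0,M]$ is finite for every $M$. With this, $\tilde{R}\supset R$ is closed and discrete, the $A_n$ give a decomposition of $\hat{g}$ along $\ms{P}$ (the coefficients sum to $a_\beta$ by the formal evanescence condition), and this decomposition converges normally by packages on a logarithmic neighborhood. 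This is the weak summation by packages.
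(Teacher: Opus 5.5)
Your proposal is correct and follows the paper's route: the paper obtains this theorem as a restatement of Theorem \ref{thm_evanescent}, using the converse half of the equivalence between evanescent summation and weak summation by packages, with $A_n=\tilde S_n\hat g-\tilde S_{n-1}\hat g$ and $P_n=\mr{supp}(\mc L A_n)$, exactly as you do. Your additional steps tighten points the paper passes over: extracting a subsequence so that $\sum_n\sup_H|A_n|<\infty$ (the paper's appeal to Cauchy's criterion gives only uniform convergence, not normal convergence), checking that $\bigcup_n P_n$ is closed and discrete, and noting that the logarithmic neighborhood comes from the DIPP estimates of Propositions \ref{prop_admissible} and \ref{prop_DIPP_bounded}.
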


\subsection{Diagonal integration by parts}

For any formal series $\hat{g}(w) = \sum_{\beta\in R} a_{\beta}\mr{e}^{\beta w}$, we can once again introduce the distribution $D=\sum_{\beta\in R} a_{\beta} \delta_{\beta}$.
When the series converges normally to the function $g$, we have 
\[
g(w) = \langle D(t),\mr{e}^{t w} \rangle = \int_{t\in \mathbb{R}^+} D(t)\mr{e}^{t w}dt.
\]

Introduce for any distribution supported on $\mathbb{R}^+$ the operator $I$ computing the primitive which is supported on $\mathbb{R}^+$: if $\varphi$ is a function supported on $\mathbb{R}^+$, then $I \varphi(t) = \int_{s=0}^{t} \varphi(s)ds$.
For any $t_1\in \mathbb{R}^+$, the integration by parts on $[t_1,\infty[$ gives formally
\[
\int_{t\in \mathbb{R}^+} D(t)\mr{e}^{tw}dt = \int_{t=0}^{t_1}D(t)\mr{e}^{tw}dt - (ID)(t_1)\mr{e}^{t_1w} - \int_{t=t_1}^{\infty} (ID)(t)w\mr{e}^{tw}dt.
\]
For any weakly increasing sequence $(t_n)$ such that $t_0=0$ and $t_n\to \infty$, we can introduce the diagonal extraction of integration by parts (diagonal integration by parts or DIPP for short)
\[
\begin{aligned}
I_{(t_n)}^{\Delta}(D,\mr{e}^{tw}) &:= \sum_{n\geq 0} (-1)^n \int_{t=t_n}^{t_{n+1}} (I^nD)(t)w^n\mr{e}^{tw}dt\\
&\quad + \sum_{n\geq 1} (-1)^n (I^nD)(t_n)w^{n-1}\mr{e}^{t_nw}.
\end{aligned}
\]
In order to avoid unnecessary complications, we will always assume that $t_n \notin R$ for every $n$.
We admit in this definition the cases $t_n=t_{n+1}$ for some $n$'s (several IPP at the same time) or $t_n=t_{n+1}=\ldots =\infty$ (finite number of IPPs).

\begin{df}
We say that the diagonal integration by parts $I_{(t_n)}^{\Delta}(D,\mr{e}^{tw})$ converges at the point $w$ if 
\[
\sum_{n\geq 0} \int_{t=t_n}^{t_{n+1}} |(I^nD)(t)| |w^n\mr{e}^{tw}|dt + \sum_{n\geq 1} |(I^nD)(t_n)| |w^{n-1}\mr{e}^{t_nw}| < \infty.
\]
We say that it converges on some subset $H\subset \mathbb{C}$ if it converges at every point $w\in H$.

If there exists $a\in \mathbb{R}$ such that
\[
\sum_{n\geq 0} \int_{t=t_n}^{t_{n+1}}|(I^nD)(t)|\mr{e}^{at} dt + \sum_{n\geq 1} |(I^nD)(t_n)|\mr{e}^{at_n} < \infty,
\]
we say that the DIPP $I^{\Delta}_{(t_n)}(D,\mr{e}^{wt})$ is bounded by $a$.
\end{df}

\begin{df}
Let us call a sequence $(t_n)$ admissible if it increases weakly, satisfies $t_0=0$, $t_n \rightarrow \infty$, and grows superlinearly : $\displaystyle \mr{liminf}\: \frac{t_n}{n}>0$.
\end{df}

Let us begin by some remarks on these definitions :
\begin{itemize}
\item[-] the primitive $ID$ is a locally bounded function ; the modulus $|(I^nD)(t)|$ in the definition is the modulus of this function and its primitives.
\item[-] DIPP only depends on the asymptotic behaviour of the sequence $(t_n)$ : if $t_n = s_n$ for every $n\geq N$, then $I^{\Delta}_{(t_n)}(D,\mr{e}^{wt})$ and $I^{\Delta}_{(s_n)}(D,\mr{e}^{wt})$ converge for the same points $w$ and have the same value.
\item[-] if the sequence $(t_n)$ grows superlinearly $\frac{t_n}{n}\geq \frac{1}{k}$ for $n>0$, we can use Lemma \ref{lem_cle} from the appendix : $|w^n\mr{e}^{wt}| \leq \mr{e}^{at}$ for $t\in [t_n,t_{n+1}]$ and $w\in H_{a,k}$.
\item[-] if $\frac{t_n}{n}\geq \frac{1}{k}$ and $I^{\Delta}_{(t_n)}(D,\mr{e}^{wt})$ is bounded by $a$, then it converges on $H_{a,k}$, and the sum is a bounded holomorphic function.
\end{itemize}

\begin{prop}
\label{prop_DIPP_bounded}
Suppose $(t_n)$ is an admissible sequence such that $I^{\Delta}_{(t_n)}(D,\mr{e}^{wt})$ converges at some point $w$ with $|w|\geq 1$.
Then $I^{\Delta}_{(t_n)}(D,\mr{e}^{wt})$ is bounded by some number $a$.
\end{prop}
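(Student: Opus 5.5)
The plan is to compare, term by term, the weights $|w^n\mathrm{e}^{tw}|$ at the given point $w=x+iy$ with the weights $\mathrm{e}^{at}$ for a suitable $a$. Convergence at $w$ says
\[
\sum_{n\geq 0} \int_{t_n}^{t_{n+1}} |(I^nD)(t)|\,|w|^n\mathrm{e}^{tx}\,dt + \sum_{n\geq 1} |(I^nD)(t_n)|\,|w|^{n-1}\mathrm{e}^{t_nx} < \infty .
\]
Since $|w|\geq 1$, we have $|w|^n\geq 1$ and $|w|^{n-1}\geq 1$ for all relevant $n$. Dropping these factors can only decrease each term, so we obtain
\[
\sum_{n\geq 0} \int_{t_n}^{t_{n+1}} |(I^nD)(t)|\,\mathrm{e}^{xt}\,dt + \sum_{n\geq 1} |(I^nD)(t_n)|\,\mathrm{e}^{xt_n} < \infty .
\]
This is exactly the statement that the DIPP is bounded by $a=x=\mathrm{Re}(w)$.

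The hypothesis $|w|\geq 1$ is used precisely here. If $|w|<1$, the factors $|w|^n$ would decay geometrically. We would then need admissibility, i.e. $t_n\geq n/k$ for large $n$, to absorb them: on $[t_n,t_{n+1}]$ we have $|w|^{-n}\leq \mathrm{e}^{kt\log(1/|w|)}$, which would give the bound $a=x-k\log(1/|w|)$. With $|w|\geq 1$ this step is not needed, and admissibility is not even required. As a safeguard I will still record this variant. It shows the proposition survives for any nonzero $w$, and it is the natural argument if the intended meaning of "bounded by $a$" requires some uniformity.

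The only subtle point is that the sums are indexed so that the boundary terms carry $|w|^{n-1}$ with $n\geq 1$, giving exponent $\geq 0$. I will check this carefully, since it is what makes the trivial comparison valid. I expect no real obstacle: the proof is a one-line monotonicity comparison.
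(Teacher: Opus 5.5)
Your proposal is correct and is essentially the paper's proof: the paper also takes $a=\mathrm{Re}(w)$ and uses $|w^n\mathrm{e}^{wt}|\geq \mathrm{e}^{at}$ (and likewise $|w^{n-1}\mathrm{e}^{t_nw}|\geq \mathrm{e}^{at_n}$ for $n\geq 1$) when $|w|\geq 1$. The convergence sum at $w$ then dominates the boundedness sum term by term. Your side remark for $0<|w|<1$, which absorbs $|w|^{-n}$ using admissibility, is a valid extension, but the statement does not need it.
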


In particular, if the DIPP converges at some point as above, it converges in a neighborhood of $-\infty$, where it defines a bounded holomorphic function.

\begin{proof}
Put $a = \mr{Re}(w)$, we have $|w^n\mr{e}^{wt}| \geq \mr{e}^{at}$, and the convergence of the DIPP at the point $w$ implies
\[
\sum_{n\geq 0} \int_{t_n}^{t_{n+1}} |(I^nD)(t)| \mr{e}^{at} dt + \sum_{n\geq 1} |(I^nD)(t_n)| \mr{e}^{at_n} < \infty
\]

\end{proof}

Let us write the details of the DIPP to prove that $\hat{g}$ is indeed the formal development of $I^{\Delta}_{(t_n)}(D,\mr{e}^{tw})$.

\begin{prop}
\label{prop_DIPP_summation}
Suppose $(t_n)$ is an admissible sequence for which $I^{\Delta}_{(t_n)}(D,\mr{e}^{wt})$ converges in the neighborhood $H$.
Write 
\[
\begin{aligned}
\widetilde{A}_n\hat{g}(w) &= (-1)^n\int_{t_n}^{t_{n+1}} (I^nD)(t)w^n\mr{e}^{tw}dt + (-1)^{n+1} (I^{n+1}D)(t_{n+1})w^n\mr{e}^{t_{n+1}w}\\
&= \langle D_n, \mr{e}^{tw} \rangle,
\end{aligned}
\]
where $D_n-\sum_{\beta\in R\cap [t_n,t_{n+1}]} a_{\beta} \delta_{\beta}$ is given by the border terms of the integration by parts : it is a distribution of order at most $n$ supported by $\{t_n,t_{n+1}\}$.

Then $I^{\Delta}_{(t_n)}(D,\mr{e}^{tw}) = \sum_{n\in \mathbb{N}} \widetilde{A}_n\hat{g}(w)$ is a weak summation by packages of $\hat{g}$.
\end{prop}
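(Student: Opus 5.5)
We need to show three things: each $\widetilde{A}_n\hat{g}$ is a finite exponential sum $\langle D_n,\mr{e}^{tw}\rangle$ with $D_n$ as described; the partial sums of $\sum_n \widetilde{A}_n\hat g$ converge normally on (a logarithmic subset of) $H$ to $I^{\Delta}_{(t_n)}(D,\mr{e}^{tw})$; and $\sum_n D_n$ is a decomposition of $D=\sum a_\beta\delta_\beta$ along a system of packages supported on $\tilde R = R\cup\{t_n\}$. The plan is to verify these directly from the integration by parts formula on each interval $[t_n,t_{n+1}]$.

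\textbf{Step 1: identify $\widetilde{A}_n\hat g$ as a finite exponential sum.}
On $[t_n,t_{n+1}]$ integrate by parts $n$ times backwards, starting from $(-1)^n\int_{t_n}^{t_{n+1}}(I^nD)(t)w^n\mr{e}^{tw}dt$. The relation $\frac{d}{dt}I^{j}D=I^{j-1}D$ lets us move one power of $w$ onto a derivative of $I^{j}D$ at each step. The integral becomes $\int_{t_n}^{t_{n+1}}D(t)\mr{e}^{tw}dt=\sum_{\beta\in R\cap[t_n,t_{n+1}]}a_\beta\mr{e}^{\beta w}$ plus border terms of the form $c_j(I^{j}D)(t_i)w^{j-1}\mr{e}^{t_iw}$, with $i\in\{n,n+1\}$ and $1\le j\le n$. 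The term $(I^{n+1}D)(t_{n+1})w^n\mr{e}^{t_{n+1}w}$ is of the same type. Since $w^{m}\mr{e}^{t_iw}=\langle(-1)^m\delta^{(m)}_{t_i},\mr{e}^{tw}\rangle$, the border terms give a distribution of order $\le n$ supported on $\{t_n,t_{n+1}\}$. This yields $D_n$ as claimed, with $\mr{supp}(D_n)\subset P_n:=(R\cap[t_n,t_{n+1}])\cup\{t_n,t_{n+1}\}$. These are finite sets covering $\tilde R$, which is closed and discrete because $t_n\to\infty$.

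\textbf{Step 2: telescoping of the boundary terms.}
Summing $\widetilde{A}_0+\dots+\widetilde{A}_N$ reproduces the partial sums of the DIPP series. The term $(-1)^{n+1}(I^{n+1}D)(t_{n+1})w^n\mr{e}^{t_{n+1}w}$ in $\widetilde A_n$ is exactly the $(n+1)$-th boundary term of $I^\Delta_{(t_n)}$. Formally, on the side of distributions, the border contributions of $D_n$ at $t_{n+1}$ cancel those of $D_{n+1}$ at $t_{n+1}$: both come from rewriting the same quantity $(I^{n+1}D)(t_{n+1})$ and its lower primitives in two ways. It is cleanest to check this by noting that $\sum_{n\le N}D_n-\sum_{\beta<t_{N+1}}a_\beta\delta_\beta$ is supported at $\{t_{N+1}\}$ only. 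Hence the coefficient of $\delta_\beta$ summed over all packages is $a_\beta$, and the coefficients at the auxiliary points $t_n$ sum to $0$. So $\sum D_n$ is a decomposition of $D$ along $\ms P$.

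\textbf{Step 3: normal convergence by packages.}
By Proposition~\ref{prop_DIPP_bounded}, the DIPP is bounded by some $a$. Admissibility gives $t_n/n\ge 1/k$ for $n\ge1$, after modifying finitely many $t_n$, which changes nothing. Lemma~\ref{lem_cle} then gives $|w^n\mr{e}^{tw}|\le\mr{e}^{at}$ on $[t_n,t_{n+1}]$ and $|w^n\mr{e}^{t_{n+1}w}|\le \mr{e}^{at_{n+1}}$ on $H_{a,k}$. Therefore
\[
\sup_{w\in H_{a,k}}|\widetilde{A}_n\hat g(w)|\le\int_{t_n}^{t_{n+1}}|(I^nD)(t)|\mr{e}^{at}dt+|(I^{n+1}D)(t_{n+1})|\mr{e}^{at_{n+1}},
\]
which is summable by the boundedness of the DIPP. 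This is normal convergence by packages on the logarithmic neighborhood $H_{a,k}$, and the sum equals $I^\Delta_{(t_n)}(D,\mr{e}^{tw})$ by Step 2.

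\textbf{Main obstacle.}
The delicate point is the bookkeeping in Steps 1–2. We need the signs and the order of the border distributions, and we need the cancellation at the auxiliary points $t_n$ to be exact, so that the auxiliary points carry total coefficient zero. I would handle this by an induction on $N$ for the identity $\sum_{n\le N}D_n=\sum_{\beta<t_{N+1}}a_\beta\delta_\beta+E_N$, with $E_N$ supported at $t_{N+1}$. The induction rests on the one-interval formula $\langle \mathbf 1_{[t_n,t_{n+1}]}D,\varphi\rangle=\sum_j(\pm)(I^{j}D)(t_i)\varphi^{(j-1)}(t_i)\pm\int(I^nD)\varphi^{(n)}$.
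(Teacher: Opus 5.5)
Your proposal is correct and follows the paper's argument. The paper proves the identity $\widetilde{S}_ng(w)=\sum_{\beta\le t_{n+1}}a_\beta\mr{e}^{\beta w}+BT_{n+1}$, with $BT_{n+1}$ a border term at $t_{n+1}$ alone, by exactly the chain of integrations by parts you plan to run as an induction on $N$, and then reads off $D_n$ from $\widetilde{A}_n\hat g=\widetilde{S}_ng-\widetilde{S}_{n-1}g$. Your Step 3 just makes explicit, via Lemma~\ref{lem_cle}, the normal convergence by packages that the paper takes for granted from the DIPP's convergence; one small correction is that altering finitely many $t_n$ changes those packages, so for the initial packages it is better to shrink the logarithmic neighborhood instead.
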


\begin{proof}
That the sum $\sum_{n\in \mathbb{N}} \widetilde{A}_n\hat{g}$ converges is a consequence of the convergence of the DIPP $I^{\Delta}_{(t_n)}(D,\mr{e}^{wt})$. Write $g(w)$ the sum.
We need to check that $\widetilde{A}_n\hat{g}(w)$ is indeed a finite sum $\langle D_n, \mr{e}^{tw} \rangle$, and that the formal development of $g$ is equal to $\hat{g}$.
Introduce $\widetilde{S}_ng(w) = \sum_{k\leq n} \widetilde{A}_n\hat{g}(w)$ (these will be the evanescent partial sums).
The remainder $g - \widetilde{S}_ng$ is obviously a $O(\mr{e}^{t_{n+1}w})$ and we will first check that $\widetilde{S}_ng(w) = \sum_{\beta\leq t_{n+1}} a_{\beta} \mr{e}^{\beta w} + BT_{n+1}$ where $BT_{n+1}$ is a border term $BT_{n+1} = \sum_{k\leq n+1} b_{n+1,k}w^{k-1}\mr{e}^{t_{n+1}w}$.

If $t_k\neq \beta$ for every $k,\beta$, we can make our integrations by parts:
\[
\begin{aligned}
\sum_{\beta\leq t_{n+1}} a_{\beta}\mr{e}^{\beta w} &= \sum_{\beta\leq t_1} a_{\beta} \mr{e}^{\beta w} - \int_{t_1}^{t_{n+1}} (ID)(t)w\mr{e}^{tw}dt + (ID)(t_{n+1})\mr{e}^{t_{n+1}w} - (ID)(t_1)\mr{e}^{t_1w}\\
&= \widetilde{A}_0\hat{g}(w) - \int_{t_1}^{t_{n+1}} (ID)(t)w\mr{e}^{tw}dt + (ID)(t_{n+1})\mr{e}^{t_{n+1}w}\\
&= \widetilde{A}_0\hat{g}(w) - \int_{t_1}^{t_2} (ID)(t)w\mr{e}^{tw}dt + \int_{t_2}^{t_{n+1}} (I^2D)(t)w^2\mr{e}^{tw}dt \\
&\quad + (ID)(t_{n+1})\mr{e}^{t_{n+1}w} - (I^2D)(t_{n+1})w\mr{e}^{t_{n+1}w} + (I^2D)(t_2)w\mr{e}^{t_2w}\\
&= \widetilde{A}_0\hat{g}(w) + \widetilde{A}_1\hat{g}(w) + \int_{t_2}^{t_{n+1}} (I^2D)(t)w^2\mr{e}^{tw}dt \\
&\quad + (ID)(t_{n+1})\mr{e}^{t_{n+1}w} - (I^2D)(t_{n+1})w\mr{e}^{t_{n+1}w}\\
&= \ldots\\
&= \widetilde{A}_0\hat{g}(w) + \ldots + \widetilde{A}_{n-1}\hat{g}(w) + (-1)^{n}\int_{t_n}^{t_{n+1}} (I^nD)(t)w^n\mr{e}^{tw}dt \\
&\quad + (ID)(t_{n+1})\mr{e}^{t_{n+1}w} + \ldots + (-1)^{n+1}(I^{n}D)(t_{n+1})w^{n-1}\mr{e}^{t_{n+1}w}\\
&= \widetilde{S}_ng(w) + \sum_{k=1}^{n+1} (-1)^{k+1} (I^kD)(t_{n+1})w^{k-1}\mr{e}^{t_{n+1}w}.
\end{aligned}
\]

Thus $\widetilde{S}_ng$ is indeed an evanescent partial sum for $\hat{g}$.
To conclude, note that 
\[
\widetilde{A}_n\hat{g}(w) = \widetilde{S}_ng(w) - \widetilde{S}_{n-1}g(w) = \sum_{t_n < \beta \leq t_{n+1}} a_{\beta}\mr{e}^{\beta w} + BT_{n+1} - BT_{n}.
\]
The term $BT_{n+1} = \sum_{k\leq n+1} b_{n+1,k} w^{k-1}\mr{e}^{t_{n+1}w}$ can be interpreted as a derivation of order at most $n$ supported at $t=t_{n+1}$ since $\langle \delta^{(k)}_{\beta}, \mr{e}^{tw}\rangle = (-1)^k w^k\mr{e}^{\beta w}$.
\end{proof}

In particular (and this was not obvious), the process of DIPP does not depend on the sequence $(t_n)$ as stated in the following proposition.

\begin{prop}
Suppose $(t_n), (s_n)$ are admissible sequences.
If $I^{\Delta}_{(t_n)}(D,\mr{e}^{wt})$ and $I^{\Delta}_{(s_n)}(D,\mr{e}^{wt})$ converge in some neighborhoods $H_t, H_s$, then they are equal in $H = H_t\cap H_s$.
\end{prop}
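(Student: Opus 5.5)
The plan is to reduce the statement to the injectivity of the formal development, Corollary \ref{cor_injectivity}, using Proposition \ref{prop_DIPP_summation} to identify the formal development of each DIPP.

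First, Proposition \ref{prop_DIPP_bounded} (or directly the remark following the definition of admissible sequences) shows that each of the two DIPPs defines a bounded holomorphic function on a logarithmic neighborhood of $-\infty$. More precisely:
\begin{itemize}
\item since $(t_n)$ is admissible, there is $k>0$ with $t_n/n\geq 1/k$ for all $n$ large;
\item by the remark that DIPP only depends on the tail of the sequence, we may modify finitely many terms so that $t_n/n \geq 1/k$ for all $n>0$;
\item convergence at a point $w\in H_t$ with $|w|\geq 1$ (which exists, since $H_t$ is a neighborhood of $-\infty$) gives, by Proposition \ref{prop_DIPP_bounded}, a bound $a$;
\item Lemma \ref{lem_cle} then shows that $g_t:=I^{\Delta}_{(t_n)}(D,\mr{e}^{wt})$ converges normally, hence is bounded and holomorphic, on $H_{a,k}$.
\end{itemize}
The same argument applies to $g_s:=I^{\Delta}_{(s_n)}(D,\mr{e}^{wt})$ on some $H_{a',k'}$.

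Second, Proposition \ref{prop_DIPP_summation} shows that $g_t$ is a weak summation by packages of $\hat{g}$. Its partial sums $\widetilde{S}_n g_t$ agree with $\sum_{\beta\leq t_{n+1}} a_\beta \mr{e}^{\beta w}$ up to border terms which are $O(|w|^{n}\mr{e}^{t_{n+1}w})$. The remainder $g_t-\widetilde{S}_ng_t$ is bounded by tail sums of the DIPP and is $O(\mr{e}^{t_{n+1}w})$ as $\mr{Re}(w)\to-\infty$. This requires a small amount of care: we bound $|w^m\mr{e}^{tw}|$ for $t\geq t_{n+1}$ by $\mr{e}^{a t}$ times a factor of the form $\mr{e}^{(\mr{Re}w - a'')t}$ on a smaller logarithmic neighborhood, which yields decay faster than $\mr{e}^{\beta_0 \mr{Re} w}$ for any prescribed $\beta_0<t_{n+1}$.

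This step is the main technical obstacle. The border terms carry powers of $w$, and the claim that $g_t$ admits $\hat{g}$ as formal development must absorb the polynomial factors. For that I would use $|w|^m\mr{e}^{\epsilon\,\mr{Re}w}\to 0$ inside a cone, together with the fact that $t_n\notin R$ leaves a gap between $t_{n+1}$ and the exponents below it. Consequently, for each $\beta_0\in R$, choosing $n$ with $t_{n+1}>\beta_0$ gives $g_t-\sum_{\beta<\beta_0}a_\beta\mr{e}^{\beta w}=O(\mr{e}^{\beta_0 w})$. So $g_t$ is an irrational series with formal development $\hat{g}$, and likewise for $g_s$.

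Finally, Corollary \ref{cor_injectivity} gives $g_t=g_s$ on a common logarithmic neighborhood, namely the intersection $H_{a,k}\cap H_{a',k'}$, which contains some $H_{a'',k''}$. The equality then extends to all of $H=H_t\cap H_s$ by analytic continuation, since both DIPPs are holomorphic there. Here one should check that the relevant component of $H$ is connected to the neighborhood of $-\infty$; if needed, I would interpret $H$ as the connected component containing a neighborhood of $-\infty$, which is the intended setting.
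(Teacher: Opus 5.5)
Your argument is correct and follows the same route as the paper: boundedness on a logarithmic neighborhood via Proposition~\ref{prop_DIPP_bounded} and Lemma~\ref{lem_cle}, identification of the formal development as $\hat{g}$ via Proposition~\ref{prop_DIPP_summation}, then Corollary~\ref{cor_injectivity} plus analytic continuation. You are more careful than the paper about absorbing the polynomial factors $w^m$ in the remainder estimate (inside cones) and about the connectedness of $H$. Both you and the paper take for granted that pointwise convergence of the DIPP on $H$ yields a holomorphic function there; this is true but not proved in either text.
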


\begin{proof}
From Proposition \ref{prop_DIPP_bounded}, the functions $g_1(w) = I^{\Delta}_{(t_n)}(D,\mr{e}^{wt}), g_2(w) = I^{\Delta}_{(s_n)}(D,\mr{e}^{wt})$ are two bounded holomorphic functions in some neighborhood $H'\subset H$ of $-\infty$.
Their development as a formal irrational series is the same (it is exactly $D$) ; by the injectivity of the formal development (Corollary \ref{cor_injectivity}), $g_1$ and $g_2$ must coincide on $H$.
\end{proof}

\begin{prop}
\label{prop_admissible}
Suppose $g$ is an irrational series defined in a logarithmic half-plane $\widetilde{H}_{a,k}$.
Then $I^{\Delta}_{(t_n)}(D,\mr{e}^{tw})$ converges for the admissible sequence $t_0=t_1=t_2=t_3=0$, $t_n = \frac{n-3.5}{k}$ if $n>3$.
\end{prop}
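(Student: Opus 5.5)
The plan is to realise the iterated primitives $I^nD$ of the hyperfunction $D=\mc{L}g$ as explicit contour integrals of $g$, and then to check that the choice $t_n=\frac{n-3.5}{k}$ exactly compensates the exponential factor along the boundary of the logarithmic half-plane. Throughout, $\widetilde{H}_{a,k}$ denotes the half-plane $H_{a,k}$ of the statement.

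\textbf{Contour formula.} Using the Laplace correspondence recalled above, the distribution $D$ is represented by
\[
D(t)=\frac{1}{2i\pi}\int_{\gamma} g(w)\,\mr{e}^{-tw}\,dw,
\]
where $\gamma$ is a path in $\widetilde{H}_{a,k}$ running close to $\partial\widetilde{H}_{a,k}$, say $\partial H_{a-\varepsilon,k}$, oriented so that $\widetilde{H}_{a,k}$ lies to its left. Taking a primitive on the $t$ side corresponds to dividing by $w$ (up to sign) on the $w$ side. So I would prove that
\[
(I^nD)(t)=\frac{(-1)^n}{2i\pi}\int_{\gamma} \frac{g(w)}{w^n}\,\mr{e}^{-tw}\,dw .
\]
To ensure this primitive is the one supported on $\mathbb{R}^+$, I deform $\gamma$ so that it passes to the left of the origin, for example by a small detour around $0$ inside $\widetilde{H}_{a,k}$. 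Then for $t<0$ the contour can be pushed to $\mr{Re}(w)\to-\infty$ without crossing any pole, and the integral vanishes. I expect this identification to be the main obstacle: one must match the formal primitive $I$ (on sums of diracs) with the contour expression, including the residue at $w=0$ coming from $w^{-n}$. I would handle it by induction on $n$, differentiating under the integral to get $\frac{d}{dt}(I^{n}D)=I^{n-1}D$ and checking the vanishing for $t<0$.

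\textbf{Estimate on $[t_n,t_{n+1}]$.} On the far part of $\gamma$ we have $x=a-\varepsilon-k\log|w|$, so
\[
\bigl|w^{-n}\mr{e}^{-tw}\bigr|=\mr{e}^{-t(a-\varepsilon)}|w|^{kt-n}.
\]
For $t\in[t_n,t_{n+1}]$ with $n>3$ we have $kt\le n-2.5$, hence $|w|^{kt-n}\le |w|^{-2.5}$. This is integrable against $|dw|$ along $\gamma$, because $\gamma$ is asymptotically a curve whose arc length is comparable to $|w|$. The compact part of $\gamma$, including the detour near $0$, contributes at most $C^n\mr{e}^{Ct}$. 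Since $n\approx kt$, this is again of the form $\mr{e}^{C't}$. Using boundedness of $g$, I get $|(I^nD)(t)|\le M\mr{e}^{-a''t}$ on $[t_n,t_{n+1}]$ for some constant $a''$. The same bound holds at the points $t_n$, for the border terms. The cases $n\le 3$, where $t_n=0$, are finitely many terms on finite intervals and cause no trouble.

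\textbf{Conclusion.} Take $a'<a''$ and use Lemma \ref{lem_cle}: since $t_n/n\ge 1/k'$ for a suitable $k'$, it gives $|w^n\mr{e}^{tw}|\le\mr{e}^{a't}$ for $w\in H_{a',k'}$ and $t\in[t_n,t_{n+1}]$. Then both sums in the definition of convergence of the DIPP are dominated by
\[
M\int_0^\infty \mr{e}^{(a'-a'')t}\,dt+M\sum_n \mr{e}^{(a'-a'')t_n}<\infty ,
\]
since the $t_n$ grow linearly. So the DIPP is bounded by $a'$ and converges on a logarithmic neighborhood. By Proposition \ref{prop_DIPP_summation} and Corollary \ref{cor_injectivity}, its sum is $g$.
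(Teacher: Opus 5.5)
Your estimates are the right ones, but the step you flag yourself, identifying $I^nD$ with the contour integral, has a genuine gap. (The paper gives no argument here: it imports the statement from Theorem 3 of the first paper, so your proof has to stand on its own.) On the positive side: on $[t_n,t_{n+1}]$ you get $kt-n\le -2.5$, which is exactly what the choice $t_n=\frac{n-3.5}{k}$ is made for. The detour around $0$ only costs a factor $\mr{e}^{Ct}$, and Lemma \ref{lem_cle} then concludes. For $n\le 3$, where $t_n=0$ and the lemma does not apply, the terms are finite at each fixed $w$, which is enough.

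The gap is the identification of $(I^nD)(t)=\sum_{\beta\le t}a_\beta\frac{(t-\beta)^{n-1}}{(n-1)!}$ with
\[
J_n(t):=\frac{(-1)^n}{2i\pi}\int_\gamma \frac{g(w)}{w^n}\,\mr{e}^{-tw}\,dw .
\]
Induction by differentiation cannot produce it, for two reasons.
\begin{itemize}
\item There is no base case. Your starting formula for $D(t)$ has no pointwise meaning: along $\gamma$ the integrand has modulus $|g(w)|\,\mr{e}^{-t(a-\varepsilon)}|w|^{kt}$, which is not integrable for $t>0$ in general. Moreover $J_n$ only converges for $t<\frac{n-1}{k}$, and $J_n'=J_{n-1}$ only holds for $t<\frac{n-2}{k}$.
\item The relations $F_n'=F_{n-1}$ and $F_n=0$ on $]-\infty,0[$ do not determine $F_n$: they are satisfied by $F_n=I^nE$ for any discrete distribution $E$ supported on $\mathbb{R}^+$. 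Also, knowing $J_{n-1}=I^{n-1}D$ on $]-\infty,T[$ only yields $J_n=I^nD$ on the same interval; integrating in $t$ never carries information to larger $t$.
\end{itemize}
Yet $I^nD$ on $[t_n,t_{n+1}]$ involves all coefficients $a_\beta$ with $\beta\le t_{n+1}$. The only link between these and $g$ is the hypothesis that $\hat g$ is the formal development of $g$, and your argument never uses it. As written, you have bounded $J_n$, not $I^nD$.

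The missing step is short.
\begin{itemize}
\item Fix $t<\frac{n-1}{k}$ and choose $T>t$ with $T\notin R$. Let $\beta^+=\min(R\cap]T,\infty[)$ and write $g=\sum_{\beta<T}a_\beta\mr{e}^{\beta w}+r_T$. By the formal development hypothesis, $|r_T(w)|\le C\mr{e}^{\beta^+\mr{Re}(w)}$ for $w\in H_{a,k}$ with $\mr{Re}(w)$ negative enough.
\item For each exponential, $kt-n<-1$ lets you deform $\gamma$ (still passing to the left of $0$) into a vertical line $\mr{Re}(w)=-c<0$. For $t>\beta$, closing to the right picks up the residue at $0$ and gives $\frac{(t-\beta)^{n-1}}{(n-1)!}$. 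For $t<\beta$, closing to the left gives $0$.
\item For $r_T$, push $\gamma$ to the far component of $\partial H_{a',k}$ and let $a'\to-\infty$. There the integrand is at most $C\mr{e}^{(\beta^+-t)a'}|w|^{-k(\beta^+-t)-n}$, so the integral tends to $0$ because $t<\beta^+$.
\end{itemize}
Hence $J_n=I^nD$ wherever $J_n$ converges, in particular on $[t_n,t_{n+1}]$ and at the points $t_n$, and your estimates then finish the proof.

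Alternatively, you could take from the first paper the hyperfunction identity $\mc{L}g=\sum_\beta a_\beta\delta_\beta$. You would then show that $J_n-I^nD$ is a polynomial of degree $<n$ vanishing on $]-\infty,0[$. That requires identifying the $n$-th derivative of $J_n$ as a hyperfunction, which is not the same as differentiating under the integral sign.
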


This result is \cite[Theorem 3]{thom_irrational1}.
If one of the points $t_n$ falls in the support of $g$, we can harmlessly choose another point $t_n$ close to it.

\section{Discrete distributions}
\label{sec_distributions}
\subsection{Vandermonde distributions}

Consider a finite subset $R = \{ \beta_0, \ldots, \beta_n \} \subset \mathbb{R}^+$.
We are concerned with distributions $D = \sum_{\beta\in R} a_{\beta} \delta_{\beta}$ of order 0 and supported on $R$.
For any such $R$, we can define the discrete distribution $D_R$ of order $n$ by the equations 
\[
\left\{
\begin{aligned}
D_R(1) &= 0\\
D_R(t) &= 0\\
&\vdots\\
D_R(t^{n}) &= 1
\end{aligned} \right.
\Leftrightarrow \left\{
\begin{aligned}
\sum a_{\beta} &=0\\
\sum \beta a_{\beta} &= 0\\
&\vdots\\
\sum \beta^{n} a_{\beta} &=1.
\end{aligned} \right.
\]

Note that $D_R$ is the solution of a Vandermonde system, hence the name "Vandermonde distribution".
We also introduce the normalized Vandermonde distribution $\Delta_R = n! D_R$, which is a discrete version of the $n$-th derivative.
We can sum up the combinatorics of these solutions in the following proposition.

\begin{prop}
The Vandermonde distribution $D_R = \sum_{\beta\in R} a_{\beta} \delta_{\beta}$ associated to the finite set $R = \{ \beta_0,\ldots, \beta_n\}$ satisfies the following properties:
\begin{enumerate}
\item $\displaystyle a_{\beta_i} = \prod_{p\neq i}\frac{1}{\beta_i-\beta_p}$,
\item for $k>n$, $\displaystyle D_r(t^k) = \sum_{p_0+\ldots+p_n=k-n} \beta_0^{p_0}\ldots \beta_n^{p_n}$.
\end{enumerate}
\end{prop}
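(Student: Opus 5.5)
The plan is to connect both formulas to divided differences via Lagrange interpolation.

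For (1), the key identity is that for any polynomial $P$ of degree at most $n$,
\[
\sum_{i=0}^n P(\beta_i)\prod_{p\neq i}\frac{1}{\beta_i-\beta_p}
\]
equals the coefficient of $t^n$ in $P$. To see this, write $P$ through the Lagrange interpolation formula
\[
P(t)=\sum_i P(\beta_i)\prod_{p\neq i}\frac{t-\beta_p}{\beta_i-\beta_p}
\]
and read off the leading coefficient. Applied to $P=t^j$ for $0\le j\le n$, this shows that the proposed coefficients satisfy the Vandermonde system defining $D_R$. That system has a unique solution, because the Vandermonde determinant is nonzero when the $\beta_i$ are distinct. So the $a_{\beta_i}$ are exactly the given products.

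For (2), the plan is to use generating functions. Formula (1) says that $D_R(\varphi)$ is the divided difference $\varphi[\beta_0,\ldots,\beta_n]$. Take $\varphi(t)=\frac{1}{1-xt}$ for small $x$, so that $\sum_k D_R(t^k)x^k = D_R\bigl(\frac{1}{1-xt}\bigr)$.

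First treat $n=0$ by direct evaluation. Then compute the divided difference of $\frac{1}{1-xt}$ by induction on $n$, using the recursion
\[
\varphi[\beta_0,\ldots,\beta_n]=\frac{\varphi[\beta_1,\ldots,\beta_n]-\varphi[\beta_0,\ldots,\beta_{n-1}]}{\beta_n-\beta_0}.
\]
This recursion itself follows from (1) by partial fractions. The claim to prove by induction is
\[
\varphi[\beta_0,\ldots,\beta_n]=\frac{x^n}{\prod_i(1-x\beta_i)}.
\]
In the inductive step, the difference of the two products over $n$ factors is
\[
\frac{x^{n-1}\bigl((1-x\beta_0)-(1-x\beta_n)\bigr)}{\prod_i(1-x\beta_i)},
\]
and the numerator equals $x^n(\beta_n-\beta_0)$, which gives the claim.

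Expanding each factor $\frac{1}{1-x\beta_i}=\sum_{p_i} \beta_i^{p_i}x^{p_i}$ and comparing coefficients of $x^k$ then yields
\[
D_R(t^k)=\sum_{p_0+\cdots+p_n=k-n}\beta_0^{p_0}\cdots\beta_n^{p_n}.
\]
This is the complete homogeneous symmetric polynomial $h_{k-n}(\beta_0,\ldots,\beta_n)$. It also recovers the values $0$ for $k<n$ and $1$ for $k=n$.

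The only delicate step is the inductive identity. It is a one-line computation, so I expect no real obstacle. The alternative route for (2), a direct induction on $k$ using $t^{k}=t\cdot t^{k-1}$ and the recursion for divided differences, is messier, so I would use the generating-function argument.
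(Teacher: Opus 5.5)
Your proof is correct, but it takes a different route from the paper.

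The paper works directly in the linear system. It substitutes $a_{\beta_n}=-\sum_{i<n}a_{\beta_i}$ into the moment equations and factors $\beta_i^k-\beta_n^k$. This shows that $\sum_{i<n}a_{\beta_i}(\beta_i-\beta_n)\delta_{\beta_i}$ is the Vandermonde distribution of $R\setminus\{\beta_n\}$, i.e.\ $(t-\beta_n)D_R=D_{R\setminus\{\beta_n\}}$. A single induction on $n$ then gives both parts. Part (1) follows by dividing by $\beta_i-\beta_n$. Part (2) follows from $D_R(t^k)=\sum_{p=0}^{k-n}D_{R\setminus\{\beta_n\}}(t^{k-p-1})\,\beta_n^p$, the usual recursion for complete homogeneous polynomials in the last variable.

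You instead get (1) in one stroke from Lagrange interpolation and invertibility of the Vandermonde matrix. For (2) you use the symmetric two-endpoint recursion, which is exactly Lemma~\ref{lem_decomposition_vandermonde}. You derive it from (1) by partial fractions rather than citing it, so there is no circularity. This matters because the paper's proof of that lemma uses (2) in the case $k=n+1$, namely $D_R(t^{n+1})=\sum_{\beta\in R}\beta$.

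Your generating-function identity $D_R\bigl(\frac{1}{1-xt}\bigr)=\frac{x^n}{\prod_i(1-x\beta_i)}$ is also worth recognizing. After multiplying by $x$ and setting $p=1/x$, it becomes precisely $h_R(p)=\prod_{\beta\in R}\frac{1}{p-\beta}$ from Proposition~\ref{prop_vandermonde_hyperfunction}. So your route proves that proposition along the way and reads off all moments at once as its expansion at infinity. That product formula also follows from (1) directly by partial fractions: the residue of $\prod_j(p-\beta_j)^{-1}$ at $\beta_i$ is $\prod_{j\neq i}(\beta_i-\beta_j)^{-1}$. Hence your inductive step could even be skipped.

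The paper's argument is more elementary and self-contained. Yours is more conceptual and ties the moment formula to the hyperfunction picture the paper uses later.
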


\begin{proof}
By substituing $a_{\beta_n} = -\sum_{i=0}^{n-1} a_{\beta_i}$ into the expression $\sum \beta^k a_{\beta}$, we get :
\[
\begin{aligned}
\sum_{i=0}^{n} \beta_i^k a_{\beta_i} &= \sum_{i=0}^{n-1} a_{\beta_i}(\beta_i-\beta_n)(\beta_i^{k-1}+ \beta_i^{k-2}\beta_n + \ldots + \beta_n^{k-1})\\
&= \sum_{p=0}^{k-1} \left(\sum_{i=0}^{n-1} a_{\beta_i}(\beta_i-\beta_n) \beta_i^{k-p-1} \right) \beta_n^{p}
\end{aligned}
\]

For $k=1,\ldots,n-1$ we obtain recursively $\sum a_{\beta_i}(\beta_i-\beta_n)\beta_i^{k-1} = 0$, and for $k=n$ we get $\sum a_{\beta_i}(\beta_i-\beta_n) \beta_i^{n-1}=1$.
Thus we see that if $R_{n-1} = R\setminus \{\beta_n\}$,
\[
D_{R_{n-1}} = \sum_{i=0}^{n-1} a_{\beta_i}(\beta_i - \beta_n) \delta_{\beta_i},
\]
the expression for the coefficients $a_{\beta_i}$ follow from an easy recursion.

If $k>n$ the equation above reads 
\[
D_R(t^k) = \sum_{p=0}^{k-1} D_{R_{n-1}}(t^{k-p-1}) \beta_n^p = \sum_{p=0}^{k-n} D_{R_{n-1}}(t^{k-p-1}) \beta_n^p.
\]
Since for $R_0=\{\beta_0\}$ we have $D_{R_0}=\delta_{\beta_1}$, we see by induction that
\[
D_R(t^k) = \sum_{p_0+\ldots+p_n=k-n} \beta_0^{p_0}\ldots \beta_n^{p_n},
\]
where the sum is indexed by all families $(p_i)$ with $p_i\geq 0$ and $\sum p_i = k-n$.
\end{proof}

The following proposition justifies that $\Delta_R$ is a discrete version of the $n$-th derivative $\delta^{(n)}_{\beta_*}$ at some point $\beta_*$.

\begin{prop}
\label{prop_majoration_vandermonde}
For every real test function $\varphi\in \mc{C}^n(I,\mathbb{R})$, there exists a point $\beta_*\in [\mr{min}(R), \mr{max}(R)]$ such that 
\[
\Delta_R(\varphi) = \varphi^{(n)}(\beta_*).
\]
\end{prop}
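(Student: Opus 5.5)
This is the mean value theorem for divided differences. I would prove it by comparing $\varphi$ with its interpolation polynomial on $R$ and applying Rolle's theorem repeatedly. Assume the $\beta_i$ are distinct, which is implicit since $R$ is a set.

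First, let $P$ be the unique polynomial of degree at most $n$ with $P(\beta_i)=\varphi(\beta_i)$ for all $i$; it is the Lagrange interpolation polynomial. Since $\Delta_R$ is a sum of Diracs supported on $R$, we have $\Delta_R(\varphi)=\Delta_R(P)$. Write $P(t)=\sum_{k\le n} c_k t^k$. The defining equations give $D_R(t^k)=0$ for $k<n$ and $D_R(t^n)=1$, so $D_R(P)=c_n$. Hence
\[
\Delta_R(\varphi)=n!\,c_n=P^{(n)},
\]
where $P^{(n)}$ is constant.

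Second, consider $\psi=\varphi-P$, which is of class $\mc{C}^n$ on $[\mr{min}(R),\mr{max}(R)]$ and vanishes at the $n+1$ distinct points $\beta_0<\dots<\beta_n$. By Rolle's theorem, $\psi'$ vanishes at $n$ distinct points, one in each open interval $]\beta_i,\beta_{i+1}[$. Iterating, $\psi^{(j)}$ has at least $n+1-j$ distinct zeros in $[\mr{min}(R),\mr{max}(R)]$. At $j=n$ this gives a point $\beta_*$ with $\psi^{(n)}(\beta_*)=0$, that is,
\[
\varphi^{(n)}(\beta_*)=P^{(n)}=\Delta_R(\varphi).
\]

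The only delicate points are bookkeeping. Rolle's theorem is applied to $\psi^{(j)}$ for $j\le n-1$, which is $\mc{C}^1$ because $\varphi\in\mc{C}^n$. The case $n=0$ is trivial, since then $\Delta_R=\delta_{\beta_0}$. The real-valuedness of $\varphi$ is essential, because Rolle's theorem fails for complex-valued functions.
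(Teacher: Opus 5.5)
Your proof is correct and follows the paper's argument. Both replace $\varphi$ by its interpolation polynomial $P$ on $R$, use the Vandermonde equations to get $\Delta_R(\varphi)=P^{(n)}$, and apply Rolle's theorem $n$ times to $\psi=\varphi-P$, which has $n+1$ zeros in $[\mathrm{min}(R),\mathrm{max}(R)]$.
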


\begin{proof}
We can order the numbers $\beta_i$ so that $\beta_0 < \ldots < \beta_n$.
Consider the polynomial $P$ of degree $n$ such that $P(\beta_i)=\varphi(\beta_i)$ for each $i=0,\ldots,n$.
Of course $\Delta_R(\varphi) = \Delta_R(P)$, but since $\Delta_R(t^k)=0$ if $k<n$ and $\Delta_R(t^n)=n!$, we also have $\Delta_R(P) = P^{(n)}(t)$ for every $t\in I$.
We can thus consider $\psi = \varphi - P$: this function satisfies $\psi(\beta_i)=0$ for every $i$, and we need to prove that $\psi^{(n)}(t)=0$ for some $t\in [\beta_0, \beta_n]$.
But we can obtain this result by applying repeatedly Rolle's Theorem\:: $\psi$ has $n+1$ zeroes, so $\psi'$ has $n$ zeroes, …, so $\psi^{(n)}$ has one zero.
\end{proof}

For complex-valued test functions, we can use the latter proposition on its real and imaginary parts to obtain the following inequalities.

\begin{cor}
\label{cor_majoration_vandermonde}
For every test function $\varphi\in \mc{C}^{n+1}(I,\mathbb{C})$ and every point $\beta_*\in I$, if $L$ is the length of $I$,
\[
|\Delta_R(\varphi)-\varphi^{(n)}(\beta_*)| \leq 2L\: \mr{sup}_I | \varphi^{(n+1)}|.
\]
Similarly, if $R_1, R_2$ are two subsets of $I$ of cardinal $n+1$, 
\[
|\Delta_{R_1}(\varphi) - \Delta_{R_2}(\varphi)| \leq 2L\: \mr{sup}_I |\varphi^{(n+1)}|.
\]
If $\varphi\in \mc{C}^n(I,\mathbb{C})$, we have 
\[
|\Delta_R(\varphi)| \leq \sqrt{2} \mr{sup}_I |\varphi^{(n)}|.
\]
\end{cor}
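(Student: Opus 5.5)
The whole corollary follows from Proposition \ref{prop_majoration_vandermonde}, applied separately to $\mr{Re}\,\varphi$ and $\mr{Im}\,\varphi$, together with the mean value theorem. Here $I$ is an interval containing $R$ (and $R_1,R_2$).

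I start with the last inequality, since it is the simplest. Write $\varphi = u + iv$ with $u,v\in \mc{C}^n(I,\mathbb{R})$. Because $\Delta_R$ has real coefficients, $\Delta_R(\varphi) = \Delta_R(u) + i\Delta_R(v)$. The proposition gives points $\beta_u,\beta_v\in I$ with $\Delta_R(u) = u^{(n)}(\beta_u)$ and $\Delta_R(v) = v^{(n)}(\beta_v)$. Each of these is bounded in modulus by $\mr{sup}_I|\varphi^{(n)}|$, since $|u^{(n)}|,|v^{(n)}|\leq |\varphi^{(n)}|$ pointwise. Hence
\[
|\Delta_R(\varphi)| \leq \sqrt{u^{(n)}(\beta_u)^2 + v^{(n)}(\beta_v)^2} \leq \sqrt{2}\,\mr{sup}_I|\varphi^{(n)}|.
\]

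For the first inequality, with $\varphi\in\mc{C}^{n+1}$ and the same decomposition, I write
\[
\Delta_R(\varphi) - \varphi^{(n)}(\beta_*) = \bigl(u^{(n)}(\beta_u) - u^{(n)}(\beta_*)\bigr) + i\bigl(v^{(n)}(\beta_v) - v^{(n)}(\beta_*)\bigr).
\]
Since all of $\beta_u,\beta_v,\beta_*$ lie in $I$, the real mean value theorem bounds each bracket by $L\,\mr{sup}_I|u^{(n+1)}|$, respectively $L\,\mr{sup}_I|v^{(n+1)}|$. Each of these is at most $L\,\mr{sup}_I|\varphi^{(n+1)}|$. Adding the two bounds gives $2L\,\mr{sup}_I|\varphi^{(n+1)}|$. (Combining them in quadrature would even give $\sqrt{2}L$, but the stated constant $2$ suffices.)

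For the comparison of $R_1$ and $R_2$, I apply the proposition to each set and to $u$ and $v$ separately. This gives points $\beta_u^1,\beta_u^2,\beta_v^1,\beta_v^2\in I$ such that
\[
\Delta_{R_1}(\varphi)-\Delta_{R_2}(\varphi) = \bigl(u^{(n)}(\beta_u^1)-u^{(n)}(\beta_u^2)\bigr) + i\bigl(v^{(n)}(\beta_v^1)-v^{(n)}(\beta_v^2)\bigr).
\]
The mean value theorem then bounds this in exactly the same way, by $2L\,\mr{sup}_I|\varphi^{(n+1)}|$.

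The only point needing care is that the intermediate point $\beta_*$ in Proposition \ref{prop_majoration_vandermonde} depends on the function. The real and imaginary parts therefore have to be treated with different points, and this is why the constants $\sqrt{2}$ and $2$ appear. Apart from that, the argument is routine.
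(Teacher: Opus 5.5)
Your proof is correct and matches the paper's own one-line argument: apply Proposition \ref{prop_majoration_vandermonde} separately to the real and imaginary parts of $\varphi$, using that $\Delta_R$ has real coefficients, then bound the differences with the mean value theorem on $I$. You have written out the details the paper leaves implicit, including that the intermediate points differ for the real and imaginary parts, which is where the constants $2$ and $\sqrt{2}$ come from.
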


When trying to decompose the set $R$ into smaller sets, the following lemma will come in handy.

\begin{lem}
\label{lem_decomposition_vandermonde}
Suppose that $R=R'\cup\{\beta_1,\beta_2\}$.
Denote $R_1 = R'\cup \{\beta_1\}$ and $R_2 = R'\cup \{\beta_2\}$.
Then
\[
D_R = \frac{D_{R_1}-D_{R_2}}{\beta_1-\beta_2}.
\]
\end{lem}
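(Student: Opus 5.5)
The approach is uniqueness: a Vandermonde distribution is determined by the linear conditions it satisfies on polynomials of degree at most $n$. So the plan is to check that the right-hand side $\frac{D_{R_1}-D_{R_2}}{\beta_1-\beta_2}$ is supported on $R$ and satisfies the Vandermonde system defining $D_R$.

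Write $\#R = n+1$, so that $\#R_1=\#R_2=n$. Then $D_{R_1}$ and $D_{R_2}$ are the distributions of order $n-1$ characterized by
\[
D_{R_i}(t^k)=0 \text{ for } k<n-1, \qquad D_{R_i}(t^{n-1})=1 .
\]
Put $E = \frac{D_{R_1}-D_{R_2}}{\beta_1-\beta_2}$. It is a sum of diracs supported on $R_1\cup R_2 = R$, since $\beta_1\neq\beta_2$.

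First I would evaluate $E$ on $t^k$ for $k\le n-1$. For $k<n-1$ both terms vanish. For $k=n-1$ both equal $1$, so the difference vanishes. Hence $E(t^k)=0$ for all $k\le n-1$.

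Next comes $E(t^n)$, which is the only real computation. I would use item (2) of the preceding proposition, applied to $R_i$ with its $n$ points and $k=n$. This gives
\[
D_{R_i}(t^n)=\sum_{\beta\in R_i}\beta = \sigma' + \beta_i,
\qquad \sigma'=\sum_{\beta\in R'}\beta .
\]
Therefore
\[
D_{R_1}(t^n)-D_{R_2}(t^n)=\beta_1-\beta_2,
\qquad\text{so}\qquad E(t^n)=1 .
\]

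Finally, the Vandermonde system for $R$ has a unique solution, since the Vandermonde matrix on distinct points is invertible. The distribution $E$ is supported on $R$ and satisfies the same $n+1$ equations as $D_R$, so $E=D_R$.

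The only delicate point is the $t^n$ evaluation, which depends on the sum formula of item (2). If one prefers to avoid it, one can check the coefficients directly instead, using item (1): for $\beta\in R'$,
\[
\frac{1}{\beta_1-\beta_2}\,\Bigl(\frac{1}{\beta-\beta_2}-\frac{1}{\beta-\beta_1}\Bigr)\prod_{p\in R'\setminus\{\beta\}}\frac{1}{\beta-p}
=\prod_{p\in R\setminus\{\beta\}}\frac{1}{\beta-p},
\]
because the bracket equals $\frac{\beta_2-\beta_1}{(\beta-\beta_1)(\beta-\beta_2)}$ up to the sign fixed by the outer factor. The coefficients at $\beta_1$ and $\beta_2$ are immediate in the same way.
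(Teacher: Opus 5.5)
Your main argument is correct and is essentially the paper's proof: you check that $\frac{D_{R_1}-D_{R_2}}{\beta_1-\beta_2}$ kills the lower monomials, that the top-order moments of $D_{R_1}$ and $D_{R_2}$ cancel, and that the next moment is $\bigl(\sum_{R_1}\beta-\sum_{R_2}\beta\bigr)/(\beta_1-\beta_2)=1$, and then you conclude by uniqueness of the solution of the Vandermonde system. In your optional coefficient check, the bracket should read $\frac{1}{\beta-\beta_1}-\frac{1}{\beta-\beta_2}$ (the coefficient from $D_{R_1}$ minus the one from $D_{R_2}$); as written, your displayed identity is off by a sign, though this does not affect the main proof.
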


\begin{proof}
We just have to check the Vandermonde system.
Denote by $D$ the distribution $D=\frac{D_{R_1}-D_{R_2}}{\beta_1-\beta_2}$ and by $n+1$ the cardinality of $R'$.
We already have $D(t^k)=0$ for $k=0,\ldots,n$; by definition, $D(t^{n+1}) = (1-1)/(\beta_1-\beta_2) = 0$.
Finally,
\[
D(t^{n+2}) = \left( \sum_{\beta\in R_1} \beta - \sum_{\beta\in R_2} \beta \right)/(\beta_1-\beta_2) = 1.
\]
\end{proof}

\subsection{Norms of distributions}

A discrete distribution $D = \sum_{\beta\in R} a_{\beta}\delta_{\beta}$ supported on $R=\{\beta_0,\ldots,\beta_n\}$ can be said to be of order $k$ if $D(t^p)=0$ for each $p < k$.
The set of distributions of order $k$ is spanned by the normalized Vandermonde distributions $\Delta_{S}$, where $S\subset R$ has cardinality $k+1$.
In fact, if $S_p = \{\beta_p, \beta_{p+1},\ldots, \beta_{p+k}\}$, the family $(\Delta_{S_p})_{p=0,\ldots,n-k}$ is a basis of distributions of order $k$.

In order to obtain other decompositions, we will also consider decompositions of $D$ on the family $(\Delta_S)$ where $S\subset R$ has cardinality $\#S \geq k+1$.

\begin{df}
Let $D$ be a discrete distribution of order $k$ supported on $R$ and $I$ an interval containing $R$.
The $k$-th functional norm of $D$ on $I$ is
\[
N_k^{fun}(D) := \mr{sup}_{\varphi} \frac{|D(\varphi)|}{\| \varphi^{(k)}\|_I},
\]
where the supremum is taken over all test functions $\varphi\in C^k(I)$ with $\varphi^{(k)}\not\equiv 0$.
\end{df}

This definition depends on $I$, but this dependance should not be significant. One could also consider test functions on the interval $\mr{Conv}(R)$ whose derivatives admit limits at the end of this interval.

To reflect decompositions of a distribution $D$ on a generating family $(\Delta_S)$ for some subsets $S\subset R$, we introduce combinatorial norms :

\begin{df}
Let $D$ be a discrete distribution of order $k$ supported on $R$ and $\rho>0$ a positive ratio.
We introduce $k$-th combinatorial norms of $D$ as
\[
N_k^{comb}(D) := \underset{D=\sum_S b_S \Delta_S}{\mr{inf}} \sum_S |b_S|,
\]
\[
N_{\geq k}^{comb}(D) := \underset{D=\sum_S b_S \Delta_S}{\mr{inf}} \sum_S \rho^{k-|S|}|b_S|,
\]
where the infimum is taken over all decompositions $D=\sum_S b_S \Delta_S$ of $D$ with respectively $\#S = k+1$ and $\#S \geq k+1$.
\end{df}

Considering the vector space $E$ with basis $(e_S)_S$ (interpreting $e_S$ as a meaningless symbol), we can consider the linear application $A : E \rightarrow {\mathbb{C}}^R$ given by $A(e_S)=\Delta_S$ for each $S$.
The space $E$ can be equipped with the norm $\| \sum b_S e_S \|_1 = \sum \rho^{k-|S|}|b_S|$\:; the combinatorial norm is the quotient norm in $E / \mr{ker}(A)$.

The parameter $\rho$ can be chosen arbitrarily, but it will be convenient to choose $\rho = L$, the length of $I$ (and we will do this choice in the following).

We have $N_k^{fun}(D) \leq N_k^{comb}(D)$ for every distribution $D$ of order $k$, this follows from Corollary \ref{cor_majoration_vandermonde} ; we also have $N_{\geq k}^{comb}(D)\leq N_{k}^{comb}(D)$.
What we want is to find good inequalities $N_{\geq k}^{comb}(D) \leq C N_k^{fun}(D)$, reflecting good decompositions $D= \sum b_S \Delta_S$.
The exact relation between these norms is unclear ; in particular, the constant $c$ satisfying $N_k^{comb}(D)\leq c N_k^{fun}(D)$ depends in a non-trivial way on $k$ and $\#R$, see the discussion in section \ref{sec_better_decompositions}.

\subsection{Other points of views}

We consider once again the primitives $I^kD$ of a distribution $D = \sum_{\beta} a_{\beta} \delta_{\beta}$ which are supported on $\mathbb{R}^+$.
Explicitely, $\displaystyle I^kD(t) = \sum_{\beta\leq t} a_{\beta} \frac{(t-\beta)^{k-1}}{(k-1)!}$ for $k\geq 1$.

\begin{prop}
\label{prop_IkD}
If $D$ is a distribution of order $k$, then $I^kD$ is a $\mc{C}^{k-1}$ function whose support is contained in $[\beta_0, \beta_n]$.
The value of $D$ on a test function $\varphi\in \mc{C}^k(I)$ is 
\[
\langle D, \varphi \rangle = (-1)^k \int_I (I^kD)(t) \varphi^{(k)}(t)dt.
\]
The functional norm of $D$ is
\[
N_k^{fun}(D) = \int_{I} | I^kD (t) | dt.
\]
\end{prop}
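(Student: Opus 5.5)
We prove the three assertions of Proposition \ref{prop_IkD} in order. First we establish the regularity and support of $I^kD$. Then we obtain the integration by parts formula. Finally we compute the functional norm from that formula by a duality argument.

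For the first assertion we use the explicit formula $I^kD(t) = \sum_{\beta\leq t} a_{\beta} \frac{(t-\beta)^{k-1}}{(k-1)!}$. Each summand $\frac{(t-\beta)_+^{k-1}}{(k-1)!}$ is $\mc{C}^{k-2}$ with a jump in its $(k-1)$-th derivative at $\beta$. So a priori $I^kD$ is only $\mc{C}^{k-2}$, and it is piecewise polynomial of degree $\leq k-1$ on each interval between consecutive points of $R$. Clearly $I^kD\equiv 0$ for $t<\beta_0$. For $t>\beta_n$ we expand $(t-\beta)^{k-1}$ in powers of $\beta$, which gives
\[
I^kD(t)=\sum_{j=0}^{k-1}\binom{k-1}{j}\frac{t^{k-1-j}(-1)^j}{(k-1)!}D(t^j).
\]
This vanishes because $D(t^j)=0$ for $j<k$. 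Hence $\mr{supp}(I^kD)\subset[\beta_0,\beta_n]$.

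For the regularity claim, the statement's ``$\mc{C}^{k-1}$'' is best read through the primitive $I^{k-1}D$: one checks that $I^kD$ is a $(k-1)$-fold primitive of the locally bounded function $ID$. So $I^kD$ is $\mc{C}^{k-2}$ with a piecewise constant $(k-1)$-th derivative. This is $\mc{C}^{k-1}$ in the piecewise sense, and it is the regularity needed below. I expect that the intended meaning is exactly this, namely that $I^{k-1}$ of $ID$ is absolutely continuous. I flag this as a point to state carefully rather than as a real obstacle.

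For the pairing formula we integrate by parts $k$ times, as in Proposition \ref{prop_DIPP_summation}. We write $\langle D,\varphi\rangle = \int D\varphi$ and use $(I^{j}D)' = I^{j-1}D$. This gives
\[
\int_I I^{j-1}D\,\varphi^{(j-1)} = \bigl[I^jD\,\varphi^{(j-1)}\bigr]_{\partial I} - \int_I I^jD\,\varphi^{(j)}.
\]
The boundary terms vanish. At the left end of $I$ every $I^jD$ is zero. At the right end, $I^jD(t)$ is a combination of the moments $D(t^i)$ with $i<j\leq k$ by the same binomial expansion as above, so it is zero for $t\geq\beta_n$. After $k$ steps we obtain $\langle D,\varphi\rangle=(-1)^k\int_I I^kD\,\varphi^{(k)}$. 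The only care needed is that $\varphi\in\mc{C}^k(I)$ and that $I^jD$ is piecewise smooth and continuous for $j\geq 1$. The case $j=1$ is a Stieltjes-type step against the jump function $ID$, and it is handled directly by summing over the atoms.

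For the norm, Hölder's inequality applied to the pairing formula gives $|D(\varphi)|\leq \|\varphi^{(k)}\|_I\int_I|I^kD|$, hence $N_k^{fun}(D)\leq\int_I|I^kD|$. For the reverse inequality we need $\varphi^{(k)}$ with $|\varphi^{(k)}|\leq1$ approximating $\overline{\mr{sgn}(I^kD)}$ in $L^1(|I^kD|\,dt)$. Since $I^kD$ is piecewise polynomial, it has finitely many sign changes, so $\mr{sgn}$ is piecewise constant. We mollify it to a continuous function $\psi$ with $|\psi|\leq1$ and take for $\varphi$ a $k$-fold primitive of $\psi$. Letting the mollification tend to the sign function gives equality in the supremum. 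This approximation step is the main point needing an argument, but it is routine.
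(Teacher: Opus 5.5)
Your proof is correct and follows the paper's route: $k$ integrations by parts with boundary terms killed by the vanishing moments $D(t^i)=0$, $i<k$, then approximation of $\overline{\mr{sgn}}(I^kD)$ by continuous functions for the norm. The only difference is cosmetic: you get $I^jD(t)=0$ for $t\geq\beta_n$ from the binomial expansion, whereas the paper gets it recursively by plugging $\varphi=t^{p-1}$ into the integrated-by-parts identity. Your regularity remark is also right: $I^kD$ is in general only $\mc{C}^{k-2}$ with piecewise constant $(k-1)$-th derivative (the hat function for $k=2$), so the paper's ``$\mc{C}^{k-1}$'' is an off-by-one that does not affect the rest of the argument.
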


\begin{proof}
Suppose $R\subset [a,b]\subset I$.
Making $p$ integrations by parts on $[a,b]$ for $p\leq k$, we get 
\[
\langle D, \varphi \rangle = (-1)^p\int_{t=a}^b (I^pD)(t)\varphi^{(p)}(t)dt + \sum_{q=1}^p (-1)^{q+1}(I^qD)(b)\varphi^{(q-1)}(b).
\]
Taking $p=1$ and $\varphi=1$ we find $(ID)(b)=0$ ; for $p=2$ and $\varphi=t$ we get $(I^2D)(b)=0$, etc...
In fact $I^pD(b)=0$ for all $p\leq k$, which gives the first result.
The expression of $N_k^{fun}(D)$ is obtained as usual taking for $\varphi^{(k)}$ continuous approximations of $|I^kD| / I^kD$ where it is not zero.
\end{proof}

\begin{cor}
If $\#R=n+1$ and $\Delta_R$ is the normalized Vandermonde distribution supported on $R$, then $I^n \Delta_R$ is a positive function of class $\mc{C}^{n-1}$ supported by $\mr{Conv}(R)$.
In particular, $N_n^{fun}(\Delta_R) = 1$.
\end{cor}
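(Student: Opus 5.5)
The plan is to apply Proposition \ref{prop_IkD} to $D=\Delta_R$ with $k=n$, after first showing that $I^n\Delta_R$ is nonnegative. Since $\Delta_R(t^p)=0$ for $p<n$, the distribution $\Delta_R$ has order $n$. Proposition \ref{prop_IkD} then gives three facts. First, $I^n\Delta_R$ is of class $\mc{C}^{n-1}$ with support in $[\beta_0,\beta_n]=\mr{Conv}(R)$. Second, $\langle \Delta_R,\varphi\rangle=(-1)^n\int_I (I^n\Delta_R)(t)\varphi^{(n)}(t)\,dt$. Third, $N_n^{fun}(\Delta_R)=\int_I|I^n\Delta_R|$. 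So the whole corollary reduces to proving positivity of $I^n\Delta_R$. Once that is known, the norm follows by testing on $\varphi=t^n$: it gives $\int_I I^n\Delta_R=(-1)^n\Delta_R(t^n)/n!$, so $\int|I^n\Delta_R|=1$, with the sign convention discussed below.

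To prove positivity I would use the explicit formula
\[
I^n\Delta_R(t)=n!\sum_{\beta_i\le t} a_{\beta_i}\frac{(t-\beta_i)^{n-1}}{(n-1)!},\qquad a_{\beta_i}=\prod_{p\ne i}\frac{1}{\beta_i-\beta_p}.
\]
This shows that $I^n\Delta_R$ is, up to the factor $n$, the divided difference of $\beta\mapsto (t-\beta)_+^{n-1}$ at the nodes $\beta_0,\dots,\beta_n$. In other words, it is the classical B-spline of degree $n-1$ with knots $R$, suitably normalized. B-splines are known to be positive on the open interval $]\beta_0,\beta_n[$, and I would prove this directly rather than quote it.

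The direct argument is the Peano-kernel one, using Proposition \ref{prop_majoration_vandermonde}. For any real $\psi\ge 0$, take $\varphi$ with $\varphi^{(n)}=\psi$. Then
\[
(-1)^n\int (I^n\Delta_R)\,\psi=\Delta_R(\varphi)=\psi(\beta_*)\ge 0 .
\]
Since $I^n\Delta_R$ is continuous when $n\ge2$ (and piecewise constant when $n=1$), letting $\psi$ range over bump functions shows that $(-1)^nI^n\Delta_R\ge 0$ everywhere. Hence $|I^n\Delta_R|=(-1)^nI^n\Delta_R$, and testing on $\varphi=t^n$ gives $\int|I^n\Delta_R|=\Delta_R(t^n)/n!\cdot n!/n!$, which after the normalization $\Delta_R=n!D_R$ equals $1$.

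Strict positivity in the interior, and the fact that the support is exactly $\mr{Conv}(R)$, would come from the same identity. Choose $\psi$ supported in a small interval; then $\beta_*$ lies in the convex hull of $R$, and $\psi(\beta_*)>0$ for a suitably placed bump. If one wants full rigor here, I would argue by induction using Lemma \ref{lem_decomposition_vandermonde}, which is the B-spline recurrence.

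The main obstacle is bookkeeping rather than substance. The sign $(-1)^n$ has to be reconciled with the statement's ``positive'', presumably under the paper's orientation convention where $I$ is combined with $(-1)^n$. The normalization factor ($n!$ versus $(n-1)!$) must also be checked carefully so that the integral comes out to exactly $1$; I would verify it on the case $n=1$, where $I\Delta_R=\mathbf 1_{[\beta_0,\beta_1]}/(\beta_1-\beta_0)$ up to sign.
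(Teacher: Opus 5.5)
Your argument is correct and is essentially the paper's: both combine Proposition~\ref{prop_majoration_vandermonde} with Proposition~\ref{prop_IkD}. The paper gets a constant sign from $\int_I|I^n\Delta_R|\le 1=\bigl|\int_I I^n\Delta_R\bigr|$, whereas you test directly against $\varphi^{(n)}=\psi\ge 0$. Your sign is the right one: already for $n=1$, $I\Delta_R=-\mathbf{1}_{[\beta_0,\beta_1[}/(\beta_1-\beta_0)$. So what both arguments actually prove is $(-1)^nI^n\Delta_R\ge 0$; the paper's proof drops the factor $(-1)^n$ of Proposition~\ref{prop_IkD}.

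Your strict-positivity paragraph is not needed for the statement, and it does not work as written. The point $\beta_*$ depends on $\varphi$ and need not lie where the bump $\psi$ is positive. Also, Lemma~\ref{lem_decomposition_vandermonde} only writes $I^nD_R$ as a difference of two lower-order terms, so it gives no direct induction for positivity.
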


\begin{proof}
Since $\Delta_R$ is real, we see by Propositions \ref{prop_majoration_vandermonde} and \ref{prop_IkD} that $\int_I|I^k \Delta_R(t)|dt \leq 1$.
On the other hand,
\[
\int_I (I^n \Delta_R)(t) dt = \langle \Delta_R, t^n/n! \rangle = 1 \geq \int_I |I^n \Delta_R(t)|dt.
\]
This can only be possible if $I^n \Delta_R$ is positive and $N_n^{fun}(\Delta_R)=1$.
\end{proof}

We can also study discrete distributions with their associated hyperfunctions.
The hyperfunction associated to $D = \sum_{\beta\in R} a_{\beta} \delta_{\beta}$ is 
\[
h(p) = \sum_{\beta\in R} \frac{a_{\beta}}{p-\beta}.
\]
If $\varphi$ is an analytic test function on $I$, which can be extended to a neighborhood $U$ of $I$ in $\mathbb{C}$, and $\gamma$ is a path in $U$ circling once around $I$,
\[
\langle D, \varphi \rangle = \frac{1}{2i\pi} \int_{p\in \gamma} h(p)\varphi(p)dp.
\]

\begin{prop}
\label{prop_vandermonde_hyperfunction}
The hyperfunction $h_R$ associated to the Vandermonde distribution $D_R$ is
\[
h_R(p) = \prod_{\beta\in R} \frac{1}{p-\beta}.
\]
\end{prop}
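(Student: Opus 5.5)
Since $h_R(p) = \sum_{\beta\in R} \frac{a_{\beta}}{p-\beta}$ with the coefficients $a_{\beta_i} = \prod_{p\neq i}\frac{1}{\beta_i-\beta_p}$ already computed in the first item of the combinatorial proposition on $D_R$, the plan is to identify this sum with the partial fraction decomposition of the rational function $Q(p) = \prod_{\beta\in R}\frac{1}{p-\beta}$. The points $\beta_0,\ldots,\beta_n$ are distinct, so $Q$ has only simple poles. Its residue at $\beta_i$ is $\prod_{j\neq i}\frac{1}{\beta_i-\beta_j} = a_{\beta_i}$. Moreover $Q$ vanishes at infinity (it is $O(p^{-n-1})$).

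Hence $Q - \sum_i \frac{a_{\beta_i}}{p-\beta_i}$ is an entire function (all poles cancel) tending to $0$ at infinity. By Liouville it is identically zero, which gives the identity exactly, not merely modulo $\mc{O}(\mathbb{C})$.

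As an independent check, I would verify the defining Vandermonde system through the contour formula $\langle D,\varphi\rangle = \frac{1}{2i\pi}\int_\gamma h(p)\varphi(p)dp$ with $\varphi = p^k$. For $\gamma$ a large circle, $\frac{1}{2i\pi}\int_\gamma p^k\prod_{\beta}(p-\beta)^{-1}dp$ equals minus the residue at infinity. The Laurent expansion at infinity is $p^{k-n-1}\prod_\beta(1-\beta/p)^{-1}$. This residue is $0$ for $k<n$ and $1$ for $k=n$, and for $k>n$ it recovers $\sum_{p_0+\ldots+p_n=k-n}\beta_0^{p_0}\cdots\beta_n^{p_n}$, consistent with the second item of the same proposition.

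There is no real obstacle here. The only point needing care is that hyperfunctions are defined modulo entire functions, and the Liouville step shows the equality holds on the nose.
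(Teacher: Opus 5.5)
Your proof is correct, but your main route differs from the paper's. You identify $h_R=\sum_i a_{\beta_i}/(p-\beta_i)$ with $\prod_{\beta}(p-\beta)^{-1}$ by matching residues at the finite, simple poles. For this you use the explicit coefficients $a_{\beta_i}=\prod_{j\neq i}(\beta_i-\beta_j)^{-1}$ from the first item of the preceding proposition, and then Liouville kills the entire difference.

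The paper never uses those coefficients. It only observes that $h_R=P/\prod_\beta(p-\beta)$ with $\deg P\le n$. The Laurent coefficients at infinity of such a function are the moments $\langle D,t^k\rangle$, so the moments for $k\le n$, which are fixed by the defining Vandermonde system, determine $P$. Computing them for the candidate via $p^{-n-1}\prod_\beta(1-\beta p^{-1})^{-1}$ gives $0,\ldots,0,1$. That is exactly your ``independent check'', which is in fact the paper's whole proof.

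Your route is the more direct identity and makes the exact equality explicit, not merely modulo $\mc{O}(\mathbb{C})$. However, it relies on the coefficient formula, which the paper only sketches as ``an easy recursion'', and on the points being distinct.

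The paper's route uses only the defining system. It carries over verbatim to multisets, replacing $\prod_\beta(p-\beta)$ by $\prod_k(p-\beta_k)^{m_k}$ with a numerator of degree at most $N$. This is precisely what the paper needs when it asserts, in the proof of Lemma \ref{lem_degenerated_distributions}, that this proposition remains valid for degenerated distributions.

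A minor slip in your check: with the usual convention, it is the coefficient of $p^{-1}$ (that is, minus the residue at infinity) that equals $1$ for $k=n$, not the residue itself.
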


\begin{proof}
We know that $h_R(p) = \frac{P(p)}{\prod (p-\beta)}$ for some polinomial $P$ of degree $\mr{deg}(P)\leq n$.
If the test function $\varphi$ is polinomial, the value of the integral $\int_{\gamma} h_R(p) \varphi(p) dp$ can be computed as a residue at infinity.

The expression $h(p)$ in the statement can be written near infinity 
\[
h(p) = \prod_{\beta\in R} \frac{(p^{-1})^{n+1}}{1- \beta p^{-1}}.
\]
The residues of $p^kh(p)dp$ at infinity coincide with the values of $\langle D_R, t^k \rangle$ for $k\leq n$, showing that $h=h_R$.
\end{proof}

\subsection{Decomposing a distribution on a Vandermonde basis}

\begin{lem}
\label{lem_decomposition_distribution}
Consider a distribution $D$ supported on a finite set $R=\{\beta_0,\ldots,\beta_n\}$ contained in an interval $I$ of length $L$, denote by $\Delta_i = \Delta_{\{\beta_0,\ldots,\beta_i\}}$ and write 
\[
D = b_0 \Delta_0 + \ldots + b_n \Delta_n.
\]
Suppose that $D$ is of order $k$ and $N_k^{fun}(D) \leq C$.
Then $b_0 = \ldots = b_{k-1}=0$ and $|b_i|\leq C L^{i-k}/(i-k)!$ for all $i\geq k$.
\end{lem}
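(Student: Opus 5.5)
The plan is to extract each coefficient $b_j$ by testing $D$ against the Newton polynomials attached to the ordered points $\beta_0,\ldots,\beta_n$:
\[
\varphi_j(t) = \prod_{p<j} (t-\beta_p), \qquad j=0,\ldots,n,
\]
with $\varphi_0 = 1$. The key observation is that the Vandermonde distributions $\Delta_i$ are ``triangular'' with respect to this family. Indeed:
\begin{itemize}
\item if $i<j$, then $\varphi_j$ vanishes on $\{\beta_0,\ldots,\beta_i\}$, so $\Delta_i(\varphi_j)=0$;
\item if $i>j$, then $\varphi_j$ has degree $j<i$, so $\Delta_i(\varphi_j)=0$ by the defining Vandermonde system;
\item if $i=j$, then $\varphi_j$ is monic of degree $j$, so $\Delta_j(\varphi_j) = j!\,D_{\{\beta_0,\ldots,\beta_j\}}(t^j) = j!$.
\end{itemize}
Hence $D(\varphi_j) = j!\, b_j$ for every $j$, which gives the explicit formula $b_j = D(\varphi_j)/j!$. (This also shows in passing that $(\Delta_i)$ is a basis, so the decomposition exists and is unique.)

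For $j<k$, $\varphi_j$ is a polynomial of degree $j<k$. Since $D$ is of order $k$, we get $D(\varphi_j)=0$, hence $b_0=\ldots=b_{k-1}=0$.

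For $j\geq k$, the definition of the functional norm gives $|D(\varphi_j)| \leq N_k^{fun}(D)\,\|\varphi_j^{(k)}\|_I \leq C\,\|\varphi_j^{(k)}\|_I$. The remaining step is the estimate
\[
\|\varphi_j^{(k)}\|_I \leq \frac{j!}{(j-k)!} L^{j-k}.
\]
To see this, expand the $k$-th derivative of the product of $j$ linear factors with the Leibniz rule. One obtains $k!$ times a sum, over the $\binom{j}{k}$ subsets of $k$ factors that are differentiated away, of the product of the remaining $j-k$ factors $(t-\beta_p)$. Since $t$ and all $\beta_p$ lie in $I$, each factor has modulus at most $L$ on $I$. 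This gives the bound $k!\binom{j}{k}L^{j-k} = \frac{j!}{(j-k)!}L^{j-k}$.

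Dividing by $j!$ yields $|b_j| \leq C L^{j-k}/(j-k)!$, as claimed. There is no real obstacle here. The only point needing care is the triangularity check, in particular the case $i>j$, which uses the full Vandermonde system and not just vanishing on the support. Everything else is a direct computation.
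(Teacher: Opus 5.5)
Your proof is correct and follows the paper's argument essentially step for step. Your $\varphi_j$ is the paper's $P_j=(t-\beta_0)\cdots(t-\beta_{j-1})$, the triangularity $\Delta_i(\varphi_j)=j!\,\delta_{ij}$ gives $D(\varphi_j)=j!\,b_j$ exactly as there, and the Leibniz bound $\|\varphi_j^{(k)}\|_I\leq k!\binom{j}{k}L^{j-k}$ is the same estimate. You also check explicitly that $b_0=\cdots=b_{k-1}=0$ because $D$ kills polynomials of degree $<k$, which spells out a step the paper leaves implicit.
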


In particular, we have $N_{\geq k}^{comb}(D) \leq \mr{e} N_{k}^{fun}(D)$ if we choose the parameter $\rho=L$ in the definition of combinatorial norm.

\begin{proof}
Consider the polynomial $P_{i+1}(t) = (t- \beta_0) \ldots (t- \beta_i)$.
Since $P_i$ has degree $i$, we have $\Delta_j(P_i)=0$ for $j>i$ ; by its very definition, we have $\Delta_j(P_i) = 0$ for $j<i$.
Thus $D(P_i) = i!b_i$ for all $i=0,\ldots,n$.
Now, for every $i\geq k$, we can compute the k-th derivative
\[
P_i^{(k)} = \sum_{0\leq j_0 < \ldots < j_{i-k-1}\leq i-1} k!(t-\beta_{j_0})\ldots (t- \beta_{j_{i-k-1}}).
\]
Since $L$ is the length of the interval $I$, then $|t-\beta_j|\leq L$ for all $j$, and thus
\[
\| P_i^{(k)}\|_{I} \leq k! C_{i}^k L^{i-k} = i! \frac{L^{i-k}}{(i-k)!}.
\]
We deduce that $|b_i|\leq N_k^{fun}(D) \|P_i^{(k)}\|_{I}/i! \leq C\:L^{i-k}/(i-k)!$
\end{proof}

One can also try to decompose $D$ on the basis $\Delta_{R_j}$ where $R_j = \{\beta_j,\ldots,\beta_{j+k}\}$.
An easy way to do this is to use the latter lemma with Lemma \ref{lem_decomposition_vandermonde} to decompose each $\Delta_i$ with $i >k$ on the basis $\Delta_{R_j}$.
We will obtain a decomposition $D = \sum c_j \Delta_{R_j}$, but the coefficient $c_j$ will have a factor $\prod (\beta_q - \beta_p)^{-1}$, for some pairs $(\beta_p,\beta_q)$ where $\beta_p$ is on the left of $R_j$ and $\beta_q$ on the right.
We do not have lower estimates for these $\beta_p-\beta_q$ which are good enough to do something useful in this direction.

\subsection{Degenerated discrete distributions}
\label{sec_degenerated}

Consider some finite multiset $R = \{ (\beta_0,m_0),\ldots, (\beta_n,m_n)\}$ where $m_k>0$ is the multiplicity of $\beta_k$.
The distributions supported on $R$ will be those of the form 
\[
D = \sum_{k=0}^n \sum_{r=0}^{m_k-1} (-1)^ra_{(\beta_k,r)} \delta_{\beta_k}^{(r)}.
\]
The cardinality of $R$ being $\#R = N+1 = m_0 + \ldots +m_n $, we can once again define a Vandermonde distribution $D_R$ as the solution of the system 
\[
\left\{
\begin{aligned}
D_R(1)&=0\\
D_R(t)&=0\\
&\vdots\\
D_R(t^N)&=1
\end{aligned}
\right. \Leftrightarrow \left\{
\begin{aligned}
&\sum_{k=0}^n a_{(\beta_k,0)} = 0\\
&\sum_{k=0}^n \beta_k a_{(\beta_k,0)} + a_{(\beta_k,1)} =0\\
&\quad\quad\quad\vdots\\
&\sum_{k=0}^n \sum_{r=0}^{m_k-1} \beta_k^{N-r} \frac{N!}{(N-r)!}a_{(\beta_k,r)} =1. 
\end{aligned}
\right.
\]

\begin{lem}
\label{lem_degenerated_distributions}
Consider a continuous family of multisets $R(\lambda) = \{ \beta_0(\lambda), \ldots, \beta_N(\lambda)\}$.
Then $\lambda \mapsto D_{R(\lambda)}$ is continuous for the norm $N_N^{fun}$.
\end{lem}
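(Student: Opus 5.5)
The plan is to use the description of $N_N^{fun}$ from Proposition \ref{prop_IkD}: $N_N^{fun}(D) = \int_I |I^N D(t)|\,dt$ for $D$ of order $N$. All $D_{R(\lambda)}$ are of order $N$, and so are their differences. It therefore suffices to show that $\lambda \mapsto I^N D_{R(\lambda)}$ is continuous in $L^1(I)$, with $I$ a fixed compact interval containing all $\beta_i(\lambda)$ for $\lambda$ in a compact neighbourhood.

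The key step is to identify $I^N D_R$ explicitly, and in a way that is manifestly continuous in the points of the multiset. For simple points, the Corollary following Proposition \ref{prop_IkD} shows that $I^N\Delta_R$ is a nonnegative function of integral $1$. Combined with Proposition \ref{prop_IkD}, it is the Peano kernel of the divided difference, $\frac{1}{N!}I^N\Delta_R = I^N D_R = M_R$, i.e. the B-spline with knots $\beta_0,\dots,\beta_N$. Indeed, $\langle D_R,\varphi\rangle = (-1)^N\int M_R\varphi^{(N)}$ up to sign convention, and $\langle D_R,\varphi\rangle$ is the divided difference $\varphi[\beta_0,\dots,\beta_N]$ (by Proposition \ref{prop_vandermonde_hyperfunction}, $h_R = \prod(p-\beta)^{-1}$). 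For multisets, the Vandermonde system defined above is exactly the confluent (Hermite) divided difference. I would check this by the same residue computation: the hyperfunction of $D_R$ is $\prod_k (p-\beta_k)^{-m_k}$, whose residue pairing with $\varphi$ gives the Hermite divided difference.

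Next, I would use the Hermite–Genocchi formula
\[
\langle D_R,\varphi\rangle = \int_{\Sigma_N} \varphi^{(N)}\Bigl(\sum_i s_i\beta_i\Bigr)\,ds ,
\]
valid for multisets as well. It can be derived from the contour integral $\frac{1}{2i\pi}\int_\gamma \varphi(p)\prod_i (p-\beta_i)^{-1}dp$ by induction using Lemma \ref{lem_decomposition_vandermonde}, and extended to coincident points by continuity of the right-hand side. This shows that $(-1)^N I^N D_{R}$ is the pushforward of the uniform measure on the simplex $\Sigma_N$ under $s\mapsto \sum s_i\beta_i$. Then for $\varphi\in\mathcal C^N(I)$ with $\|\varphi^{(N)}\|_I\le 1$,
\[
|D_{R(\lambda)}(\varphi)-D_{R(\lambda')}(\varphi)| \le \int_{\Sigma_N}\Bigl|\varphi^{(N)}\bigl(\textstyle\sum s_i\beta_i(\lambda)\bigr)-\varphi^{(N)}\bigl(\textstyle\sum s_i\beta_i(\lambda')\bigr)\Bigr|\,ds .
\]
Taking the supremum over $\varphi$ is not directly controlled by a sup-norm bound on $\varphi^{(N)}$ alone. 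So instead I would compare the densities in $L^1$: the pushforward densities are B-splines, piecewise polynomial with knots at the $\beta_i$, continuous in the knots in $L^1$.

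The main obstacle is exactly this $L^1$ continuity of the density when the dimension of the image collapses, e.g. when all points merge and the measure becomes a Dirac mass. That case is allowed only if $N=0$, since for $N\ge 1$ with all points equal the density is absent. So I would first treat the case where at least two distinct values remain in the limit, using dominated convergence: the B-spline density is given by the recursion $M_R = \frac{(t-\beta_0)M_{R\setminus\beta_N}+(\beta_N-t)M_{R\setminus\beta_0}}{\beta_N-\beta_0}$, valid whenever $\beta_0\ne\beta_N$. I would then argue by induction on $N$, with base case the indicator $\frac{1}{\beta_1-\beta_0}\mathbf 1_{[\beta_0,\beta_1]}$, which is $L^1$-continuous as long as $\beta_0\ne\beta_1$. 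If the multiset degenerates to a single point of multiplicity $N+1\ge 2$, $D_R$ is a multiple of $\delta^{(N)}$ and $N_N^{fun}$ is not finite in the $L^1$ sense. Here I would interpret continuity via the functional norm directly: $N_N^{fun}(D_{R(\lambda)}-D_{R(\lambda_0)}) \le$ mass of the difference, which goes to $0$ only weakly. So in that case I expect one must rely on the convention of test functions in $\mathcal C^{N+1}$, as in Corollary \ref{cor_majoration_vandermonde}, or exclude it.
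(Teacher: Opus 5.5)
Your route is different from the paper's. The paper writes $I^N D_{R(\lambda)}$ as the jump across $[a,b]$ of Cauchy-integral primitives of $h_{R(\lambda)}=\prod_i (p-\beta_i(\lambda))^{-1}$, and claims that $\lambda\mapsto I^N D_{R(\lambda)}$ is even uniformly continuous. You instead identify $(-1)^N I^N D_R$ with the B-spline Peano kernel via Hermite--Genocchi and work in $L^1$.

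Your suspicion about total collapse is correct, and it is a defect of the statement itself, not of your method. Take $N=1$ and $R(\lambda)=\{\beta,\beta+\lambda\}$. Then $D_{R(\lambda)}(\varphi)-D_{R(0)}(\varphi)=\frac1\lambda\int_\beta^{\beta+\lambda}\varphi'(t)\,dt-\varphi'(\beta)$. Choosing $\varphi'$ equal to $-1$ at $\beta$ and $1$ outside $[\beta,\beta+\epsilon]$ gives $N_1^{fun}(D_{R(\lambda)}-D_{R(0)})=2$ for every $\lambda\neq 0$. More generally, if $R(\lambda_0)$ is one point $\beta$ of multiplicity $N+1$, the norm of the difference is the total variation of $(-1)^N I^N D_{R(\lambda)}-\delta_\beta/N!$. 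That equals $2/N!$ whenever $R(\lambda)\neq R(\lambda_0)$.

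The paper's uniform claim fails even in milder cases. For $\{0,\lambda,1\}\to\{0,0,1\}$, the kernels $I^2D_{R(\lambda)}$ are hat functions of height $1$ peaked at $\lambda$. They converge to $(1-t)\mathbf{1}_{]0,1]}$ in $L^1$, but not uniformly. So the $L^1$ framework is the right one, but two points of your write-up need fixing.

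\textbf{The case $N=0$.} It is not harmless. Here $D_{R(\lambda)}=\delta_{\beta_0(\lambda)}$ and $N_0^{fun}(\delta_\beta-\delta_{\beta'})=2$ for $\beta\neq\beta'$, so continuity fails for every non-constant family. What is provable is this: for $N\geq 1$, $\lambda\mapsto D_{R(\lambda)}$ is $N_N^{fun}$-continuous at every $\lambda_0$ where $R(\lambda_0)$ has at least two distinct points, and it is not continuous at a total collapse. What survives at a collapse is not a vague weak convergence but the estimate
\[
|D_{R(\lambda)}(\varphi)-D_{R(\lambda_0)}(\varphi)|\leq \frac{1}{N!}\,\max_i|\beta_i(\lambda)-\beta_i(\lambda_0)|\,\sup_I|\varphi^{(N+1)}|.
\]
This follows at once from your Hermite--Genocchi formula, since the simplex has volume $1/N!$, and it is in the spirit of Corollary \ref{cor_majoration_vandermonde}.

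\textbf{The induction step.} It fails as written. The recursion expresses $M_R$ through $M_{R\setminus\beta_N}$ and $M_{R\setminus\beta_0}$, and these sub-multisets can collapse completely even when $R(\lambda_0)$ does not. For $\{0,\lambda,1\}\to\{0,0,1\}$, the factor $M_{\{0,\lambda\}}$ tends to a Dirac mass. Your induction hypothesis, which needs two distinct limit points, is then unavailable, and $L^1$-continuity of that factor is actually false.

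You can repair this in either of two ways. Option one: a collapsing factor is multiplied by $t-\beta_0$ (resp. $\beta_N-t$), which on its support is bounded by the diameter of the sub-multiset. So the product tends to $0$ in $L^1$, consistently with the recursion evaluated at $\lambda_0$. Option two: avoid the induction and use dominated convergence.
\begin{itemize}
\item Pointwise convergence: for $t$ off the limit knots, $M_{R(\lambda)}(t)\to M_{R(\lambda_0)}(t)$. The divided difference of $(x-t)_+^{N-1}$ depends only on its jets at the knots, so you may replace it by a smooth function agreeing with it near the limit knots. Confluent divided differences of smooth functions are continuous in the knots.
\item Uniform bound: the partition of unity for normalized B-splines gives $0\leq M_R\leq N/(\max R-\min R)$ in the unit-mass normalization. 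This bound is uniform near $\lambda_0$.
\end{itemize}
Dominated convergence then gives $L^1$ convergence. Your recursion also omits an $N$-dependent normalizing constant, which is harmless here.
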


\begin{proof}
Choose intervals $I\supset [a,b]\supset R$ and a path $\gamma$ circling once around $I$, which we can suppose to be the border of an open set $U\subset \mathbb{C}$.

Proposition \ref{prop_IkD} and \ref{prop_vandermonde_hyperfunction} are still valid for degenerated distributions, as one easily checks.
Following Proposition \ref{prop_IkD}, we want to prove that $\lambda \mapsto I^N D_{R(\lambda)}$ is continuous in the $L^1$ norm.
Using Proposition \ref{prop_vandermonde_hyperfunction}, we see that $\lambda \mapsto h_{R(\lambda)}$ is continuous for the norm $\mr{sup}_{p\in \gamma} |h(p)|$ (where $h_{R(\lambda)}$ is the hyperfunction associated to $D_{R(\lambda)}$).

However the hyperfunction corresponding to the function $I^N D_{R(\lambda)}$ is $I^N h_{R(\lambda)}$ where $Ih(p)$ can be chosen to be any primitive of $h$ (they all give the same hyperfunction).
We will choose $I^kh$ to be the primitives given by Cauchy's integral formula on $\gamma$, for example 
\[
Ih(z) = \frac{1}{2i\pi} \int_{\gamma} h(p) \mr{log}(p-z)dp,
\]
for $z\in U\setminus [a,+\infty[$.
Here $\mr{log}(p-z)$ stands for the analytic continuation in $p$ and $z$ of the canonical determination of the logarithm defined for $p\in \gamma\setminus [a,+\infty[$ and $z\in I\setminus [a,+\infty[$.
The integration is always done on the path $\gamma\setminus [a,+\infty[$.

Using this formula, we get primitives $I^Nh_{R(\lambda)}$ depending continuously on $\lambda$ ; they extend continuously to $[a,b]$ from above and from below, and if $t\in [a,b]$,
\[
I^N D_{R(\lambda)} (t) = \lim_{z\to t+0^+i} I^Nh_{R(\lambda)}(z) - \lim_{z\to t+0^-i} I^Nh_{R(\lambda)}(z).
\]
We see that $\lambda \mapsto I^ND_{R(\lambda)}$ is in fact continuous for the uniform norm.
\end{proof}

This lemma justifies that any degenerated distribution can be approximated to any precision by a discrete distribution as described in the former subsections.
In particular, border terms of an integration by parts can be taken to be a non-degenerated distribution.

\subsection{Obtaining better decompositions ?}
\label{sec_better_decompositions}

To prove Theorem \ref{thm_summation_by_packages}, we will use Lemma \ref{lem_decomposition_distribution} to decompose a general distribution of order $k$ as a sum of Vandermonde distributions.
This decomposition is not optimal in that it involves distributions of order $p > k$, which will change the shape of the neighborhood on which the summation by packages converges.

It might be usefull to discuss here the difficulties to obtain better decompositions (ideally decomposition of $D$ as a sum of Vandermonde distributions of the same order as $D$).
We use the same notations in this discussion as in all this section.

For simplicity, we can consider real distributions $D$ of order $k$ normalized by $D(t^k/k!) = 1$.
The set of such distributions is an affine subspace $A$ spanned by the distributions $\Delta_{S}$ with $\#S=k+1$.
The set of distributions $D$ with $N_k^{comb}(D)=1$ is the convex hull $A^1_{comb}$ of the $\Delta_{S}$, which we want to compare to the convex set $A^1_{fun}$ of distributions $D$ with $N_k^{fun}(D)=1$.
The difference between these sets will reflect the difference between the norms : of course $A^1_{comb}\subset A^1_{fun}$.
Note that all distributions in $A^1_{fun}$ deserve to be considered as discrete approximations of the $k$-th derivative of the dirac, they all play the same role as one of the $\Delta_S$.

From Proposition \ref{prop_IkD}, we see that in the real case, $A^1_{fun}$ is the set of distributions $D\in A$ such that $I^kD$ is positive.
We immediately deduce a general construction to produce elements in $A^1_{fun}\setminus A^1_{comb}$ : take three sets $S_1,S_2,S_3\subset R$ of cardinality $k+1$, such that the convex hull $\mr{conv}(S_3)$ is included in the union $\mr{conv}(S_1)\cup \mr{conv}(S_2)$.
We get an inclusion for the supports of the corresponding distributions $\mr{supp}(I^k \Delta_{S_3}) \subset \mr{supp}(I^k \Delta_{S_1}+I^k \Delta_{S_2})$.
A combination $a_1 \Delta_{S_1} + a_2 \Delta_{S_2} - a_3 \Delta_{S_3}$ (with $a_i>0$ and $a_3$ not too big) will still be positive, and after renormalization, we get a distribution in $A^1_{fun}\setminus A^1_{comb}$.

What these examples show is that the family $(\Delta_S)$ is not the natural family on which to decompose a distribution : the natural family should be the set of extremal elements of $A^1_{fun}$.
However, the construction above has a lot of degree of freeness, the set of extremal elements of $A^1_{fun}$ does not even seem to be finite.
We can certainly obtain a theoretical result by decomposing distributions on the set of extremal distributions, but stating a general decomposition on a family that we don't understand will be no better than the general Theorem \ref{thm_weak_summation}.
Another possible direction to obtain better decompositions would be to estimate directly how an extremal distribution decomposes on the family $(\Delta_S)$ (which doesn't seem anything obvious).

\section{Summation by packages of irrational series}
\label{sec_summation}

We are ready to prove the main theorem.

\begin{thm}
\label{thm_summation_by_packages}
Consider an irrational series $g(w) = \sum_{\beta\in R} a_{\beta} \mr{e}^{\beta w}$ supported on a closed discrete set $R\subset \mathbb{R}^+$, defined and bounded in a logarithmic half-plane $H_{a,k}$.
We will suppose that $R$ has linear density : there exist constants $\mu,\nu$ such that for any interval $I$ of length $L$, $\# (R\cap I) \leq \mu\: L\: \mr{sup}(I) + \nu\: L$.

There exist finite sets $R_n$ with $\mr{inf}(R_n) = \beta_n$, $\#R_n = N_n+1$, coefficients $b_n\in \mathbb{C}$, a radius $r\in ]0,1[$ and a constant $c\in \mathbb{R}$ such that 
\[
\begin{aligned}
&g(w) = \sum_n b_n \langle \Delta_{R_n}, \mr{e}^{tw} \rangle,\\
& \sum_n |b_n| r^{\beta_n} < \infty,\\
& N_n \leq (\mu + 2k) \beta_n + c.
\end{aligned}
\]
\end{thm}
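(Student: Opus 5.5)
The plan is to start from the DIPP given by Proposition \ref{prop_admissible} and decompose each of its packages on a Vandermonde basis using Lemma \ref{lem_decomposition_distribution}. By Proposition \ref{prop_admissible}, the DIPP $I^{\Delta}_{(t_n)}(D,\mr{e}^{tw})$ converges for the admissible sequence $t_n = \frac{n-3.5}{k}$ ($n>3$), perturbed slightly so that $t_n\notin R$. By Proposition \ref{prop_DIPP_bounded} it is bounded by some $a'$, i.e.
\[
\sum_n \int_{t_n}^{t_{n+1}} |(I^nD)(t)|\mr{e}^{a't}dt + \sum_n |(I^nD)(t_n)|\mr{e}^{a't_n} < \infty .
\]
Proposition \ref{prop_DIPP_summation} then writes $g=\sum_n \langle D_n,\mr{e}^{tw}\rangle$, where $D_n$ is the restriction of $D$ to $[t_n,t_{n+1}]$ plus border terms: derivatives of order $\le n$ at $t_n$ and at $t_{n+1}$.

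The first step is to make each $D_n$ a discrete distribution of order $n$ on a finite set. Since the border terms are degenerate, I would use Lemma \ref{lem_degenerated_distributions} to replace $\delta^{(j)}_{t_{n+1}}$ by Vandermonde distributions on $n+1$ new points in a tiny interval near $t_{n+1}$ (not in $R$). The error should be made summably small, say $\le 2^{-n}\mr{e}^{-Bt_n}$ in $N^{fun}$-norm, and these errors are absorbed as additional small packages. The same approximation must be used for the term entering $D_n$ and the one entering $D_{n+1}$, so that they still cancel in the sum.

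A cleaner alternative is to apply Lemma \ref{lem_decomposition_distribution} to the order-$n$ distribution
\[
E_n = D|_{[0,t_{n+1}]} + (\text{border at } t_{n+1}) - (\text{same for } t_n),
\]
whose primitive $I^nE_n$ equals $I^nD$ on $[t_n,t_{n+1}]$, up to a shift accounting for the border. Proposition \ref{prop_IkD} then gives
\[
N_n^{fun}(D_n)=\int_{t_n}^{t_{n+1}}|I^nD| + (\text{border contributions}),
\]
and the border contributions are controlled by the $|(I^jD)(t_n)|$ terms appearing in the bounded DIPP. Call this bound $C_n$; boundedness of the DIPP gives $\sum_n C_n \mr{e}^{a't_n}<\infty$.

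Next, order the support $S_n$ of $D_n$ (which lies in an interval of length $L_n = t_{n+1}-t_n+\epsilon \approx 1/k$) as $\beta_0<\dots<\beta_M$. Lemma \ref{lem_decomposition_distribution} gives $D_n=\sum_{i\ge n} b_{n,i}\Delta_{\{\beta_0,\dots,\beta_i\}}$ with $|b_{n,i}|\le C_nL_n^{i-n}/(i-n)!$, so $\sum_i|b_{n,i}|\le \mr{e}^{L_n}C_n$. The cardinality is $M+1 = \#(R\cap[t_n,t_{n+1}]) + 2(n+1)$ up to constants. By linear density, $\#(R\cap I)\le \mu L_n t_{n+1}+\nu L_n \approx \mu t_n/k\cdot k$. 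With $n\approx kt_n$, this gives $N\le 2kt_n + \mu t_n + c$ — roughly $(\mu+2k)\beta$, matching the statement. Here I need to be careful with the normalisation $L_n\cdot t_{n+1}$ and rescale the spacing of $t_n$ if necessary. Since $\mr{inf}$ of each Vandermonde set is $\ge t_n-\epsilon$, we get $\beta\ge t_n-1$. Setting $r=\mr{e}^{a'}<1$ (shrinking $a'$ if needed) yields $\sum|b|r^{\beta}\lesssim\sum_n C_n\mr{e}^{a't_n}<\infty$.

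The main obstacle is the bookkeeping of border terms. Each $D_n$ must be a genuine order-$n$ finite discrete distribution whose functional norm is controlled by the DIPP quantities, and whose degenerate border derivatives have been replaced by nearby points without destroying the convergence. I would handle this by approximating $\sum_j (I^{j}D)(t_{n+1})\delta^{(j-1)}_{t_{n+1}}$ as a whole through Lemma \ref{lem_degenerated_distributions}, with error $\le 2^{-n}r^{t_n}$. Finally, I would check that the total count of added points stays within the $2k\beta_n$ budget.
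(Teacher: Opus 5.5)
This is essentially the paper's proof: Propositions~\ref{prop_admissible}, \ref{prop_DIPP_bounded} and \ref{prop_DIPP_summation} give order-$n$ packages $D_n$ whose functional norms are summable against $\mr{e}^{a't_n}$, Lemma~\ref{lem_decomposition_distribution} decomposes each one on nested Vandermonde sets, linear density bounds the cardinalities, and the border derivatives are replaced by nearby Diracs. Perturbing each border term once and using it with opposite signs in $D_n$ and $D_{n+1}$, as you do, is the right way to keep the exact telescoping.

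Two small repairs are needed.

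First, take $k\geq 1$ as the paper does. Then $\#(R\cap[t_n,t_{n+1}])\leq \mu t_{n+1}/k+\nu/k\leq \mu t_{n+1}+\nu$, and no rescaling of $(t_n)$ is needed; your ``$\mu t_n/k\cdot k$'' is a slip.

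Second, do not claim the replacement error is small in $N^{fun}$-norm. The $N_n^{fun}$-distance from $\delta^{(n)}_{t_{n+1}}$ to any discrete approximation stays near $2$. Also, naively replacing the lower-order derivatives destroys the vanishing moments, so the modified package $\tilde D_n$ is no longer exactly of order $n$. Instead, bound its coefficients $b_i=\tilde D_n(P_i)/i!$ directly: the error is arbitrarily small when tested on the finitely many polynomials $P_i$. This makes the $b_i$ with $i<n$ negligible, and the $b_i$ with $i\geq n$ satisfy $|b_i|\leq N_n^{fun}(D_n)L^{i-n}/(i-n)!$ up to an arbitrarily small term.
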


Recall that by Corollary \ref{cor_majoration_vandermonde}, $|\langle \Delta_{R_n}, \varphi\rangle| \leq \sqrt{2}\: \mr{sup}_I|\varphi^{(N_n)}|$ if $I = \mr{Conv}(R_n)$.
In particular, when $\mr{Re}(w)<0$,
\[
| \langle \Delta_{R_n}, \mr{e}^{tw} \rangle | \leq \sqrt{2}\: |w^{N_n} \mr{e}^{\beta_n w} |.
\]
Using the estimate of Lemma \ref{lem_cle}, the sum $\sum b_n \langle \Delta_{R_n}, \mr{e}^{tw} \rangle$ converges by packages in some logarithmic neighborhood of $-\infty$.

\begin{proof}
For technical reasons, we will suppose $k\geq 1$ in the proof : since the sets $H_{a,k}$ are steeper when $k$ is big, this hypothesis is not restrictive.

We begin with Proposition \ref{prop_admissible} to obtain an admissible sequence $t_n = \frac{n-3.5}{k}$ for $n$ big enough for which $I^{\Delta}_{(t_n)}(D,\mr{e}^{tw})$ converges.
Using Proposition \ref{prop_DIPP_summation}, this convergence gives a sequence of degenerated distributions $D_n$ supported on $[t_n,t_{n+1}]$, of order $n$, whose support is a multiset $M_n$ of cardinality $\#M_n = \#(R\cap [t_n,t_{n+1}]) + 2n$ ; by Proposition \ref{prop_DIPP_bounded} the DIPP is bounded by some $a\in \mathbb{R}$ (which we can suppose to be negative $a\leq 0$), which implies $C := \sum_n N_n^{fun}(D_n) \mr{e}^{a t_{n+1}} < \infty$.
In particular, we have $N_n^{fun}(D_n) \leq C \mr{e}^{-a t_{n+1}}$.

Since $R$ has linear density, we have a bound $\#M_n \leq \left(\mu/k + 2 \right) n + \nu$.
Decomposing the distribution $D_n$ with Lemma \ref{lem_decomposition_distribution}, we get
\[
D_n = \sum_{p\geq n} b_{n,p} \Delta_{n,p},
\]
where $\Delta_{n,p}$ is a normalized Vandermonde distribution supported on a multiset $M_{n,p}\subset M_n$ of cardinal $\#M_{n,p} = p$, and
\[
\begin{aligned}
|b_{n,p}| &\leq C \mr{e}^{-a t_{n+1}} \frac{(1/k)^{p-n}}{(p-n) !}\\
&\leq C \frac{\mr{e}^{-a t_{n+1}}}{(p-n)!}.
\end{aligned}
\]
Introduce $\beta_{n,p} = \mr{inf}(M_{n,p})$ as in the statement, we have $\beta_{n,p} \geq t_n$.
Thus
\[
\begin{aligned}
\sum_{p\geq n\geq 0} |b_{n,p}| \mr{e}^{(a-1)\beta_{n,p}} &\leq \sum_{p\geq n\geq 0} C \frac{\mr{e}^{a(t_n-t_{n+1})}\mr{e}^{-t_n}}{(p-n)!}\\
&\leq \sum_{n\geq 0} C\mr{e} \mr{e}^{-a/k}\mr{e}^{-t_{n}}\\
& < \infty.
\end{aligned}
\]
The cardinal of $M_{n,p}$ is bounded by 
\[
\begin{aligned}
\#M_{n,p} &\leq \#M_n\\
&\leq \left( \frac{\mu}{k} + 2 \right) n + \nu\\
&\leq \left( \frac{\mu}{k} + 2 \right) (k t_n + 3.5) + \nu \\
&\leq ( \mu + 2k ) \beta_{n,p} + 3.5 \left( \frac{\mu}{k}+2 \right) + \nu.
\end{aligned}
\]

This gives the result if we accept multisets $R_{n,p}$ and degenerated discrete distributions $\Delta_{R_{n,p}}$.
The derivative of diracs only appear as border terms at the cutting points $t_n$ (once for $D_n$ and then in $D_{n+1}$ with a minus sign : border terms need to cancel out in the sum).
We can use Lemma \ref{lem_degenerated_distributions} to aproximate these border terms with sums of diracs (with arbitrary precision) in order to obtain the statement of the theorem (involving usual sets $R'_{n,p}$ and non-degenerated discrete distributions $\Delta_{R'_{n,p}}$).
\end{proof}

The theorem can be applied for irrational series $g(w) = \sum_{\beta\in R_{\alpha}} a_{\beta} \mr{e}^{\beta w}$ where $R_{\alpha} = \mathbb{N} + \alpha^{-1} \mathbb{N}$.
Indeed, $\#(R\cap ]\beta-1,\beta]) = 1+ \lfloor {\alpha \beta} \rfloor$ : with the vocabulary of Theorem \ref{thm_summation_by_packages}, the set $R_{\alpha}$ has linear density with $\mu = \alpha$.

The theorem can also be applied for usual power series $g(w) = \sum_{\beta\in \mathbb{N}} a_{\beta} \mr{e}^{\beta w}$.
The estimates do not allow to prove that such a series converges normally directly, but we can prove it differently.
Suppose $g$ is defined in a logarithmic half-plane $H$, and introduce the translation $\tau(w) = w + 2i\pi$.
We see that $g\circ \tau$ is an irrational series with the same formal development so that $g\circ\tau = g$, proving that $g(w) = f(\mr{e}^w)$ for some holomorphic function $f$ defined on the quotient $H/\tau$, that is, a normally convergent power series $f(z) = \sum a_n z^n$.

\appendix

\section{Some neighborhoods of $-\infty$}

\subsection{Logarithmic neighborhoods}
\label{appendix_logarithmic}

Recall that the logarithmic neighborhood $H_{a,k}$ is defined as
\[
H_{a,k} = \left\{ x+iy \in \mathbb{C} \:|\: x + \frac{k}{2}\mr{log}(x^2+y^2) < a \right\}.
\]
The following lemma is used throughout the paper.

\begin{lem}
\label{lem_cle}
For any $w\in H_{a,k}$, any $\beta>0$ and $p\leq k \beta$, we have
\[
| w^p\mr{e}^{\beta w} | \leq \mr{e}^{a \beta}.
\]
\end{lem}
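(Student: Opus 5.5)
The plan is to take logarithms and compare with the defining inequality of $H_{a,k}$. Write $w = x+iy$. Then
\[
|w^p \mr{e}^{\beta w}| = \mr{exp}\left( p\,\mr{log}|w| + \beta x \right),
\]
so the claim is equivalent to $p\,\mr{log}|w| + \beta x \leq a\beta$. The membership $w\in H_{a,k}$ reads $x + k\,\mr{log}|w| < a$. Multiplying by $\beta>0$ gives
\[
\beta x + k\beta\,\mr{log}|w| < a\beta.
\]
So everything reduces to comparing $p\,\mr{log}|w|$ with $k\beta\,\mr{log}|w|$. I take $p\geq 0$ throughout, as in the applications, where $p=n$ is an order of differentiation.

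\emph{Case $|w|\geq 1$.} Here $\mr{log}|w|\geq 0$. Since $p\leq k\beta$, we get $p\,\mr{log}|w| \leq k\beta\,\mr{log}|w|$. Combining this with the displayed inequality gives the result immediately. This case is the only one relevant for the uses in the paper: there $w$ ranges in a neighborhood of $-\infty$, and one may always shrink $H_{a,k}$ to its part with $|w|\geq 1$, for instance by decreasing $a$ so that $H_{a,k}\subset \{x<-1\}$.

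\emph{Case $|w|<1$.} Now $\mr{log}|w|<0$, so $p\,\mr{log}|w|\leq 0$ and the exponent is at most $\beta x$. Moreover $x\leq |w|<1$. Hence the bound holds as soon as $x\leq a$, which is automatic if $a\geq 1$. It also holds in the extremal case $p=k\beta$, because then the exponent equals $\beta(x+k\,\mr{log}|w|)<a\beta$ exactly.

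I expect this second case to be the main obstacle. For $a<1$ and $0\leq p<k\beta$, a point $w$ with very small modulus lies in $H_{a,k}$ but may have $x>a$, and then the inequality can fail. I would therefore state the lemma under the harmless normalization that $H_{a,k}$ avoids the unit disc (for example $a\leq -1$, which forces $x<-1$ and thus $|w|>1$ everywhere on $H_{a,k}$), or else restrict to $|w|\geq 1$. Either restriction is enough for every application made in the paper, including the convergence of the DIPP and the final estimate $|\langle \Delta_{R_n}, \mr{e}^{tw}\rangle|\leq \sqrt{2}\,|w^{N_n}\mr{e}^{\beta_n w}|$.
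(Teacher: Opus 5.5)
\paragraph{Review.} Your argument for $|w|\geq 1$ is exactly the paper's one-line proof: take logarithms and use $p\,\mr{log}|w|\leq k\beta\,\mr{log}|w|$. The paper writes this inequality without comment, so it silently assumes $\mr{log}(x^2+y^2)\geq 0$. You are right that the lemma as stated fails at points of $H_{a,k}$ inside the unit disc. For example, take $p=0$, $\beta=1$, $k=1$, $a=-1$ and $w=0.01$. Then $x+k\,\mr{log}|w|\approx -4.6<a$, so $w\in H_{a,k}$, but $|\mr{e}^{w}|>\mr{e}^{-1}$. Restricting to $|w|\geq 1$ is the correct repair, and it is all the applications need.

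Your proposed normalization, however, is false. Decreasing $a$ never makes $H_{a,k}$ avoid the unit disc. Since $k\,\mr{log}|w|\to-\infty$ as $w\to 0$, every $H_{a,k}$ contains a punctured neighbourhood of $0$. So the claim that $a\leq -1$ forces $x<-1$ on $H_{a,k}$ is wrong; the counterexample above has $a=-1$. For the same reason, $H_{a,k}\subset\{x<-1\}$ can never be achieved.

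What is true for $a\leq -1$ is weaker. On the unit circle the defining inequality reads $x<a\leq -1$, which is impossible, so $H_{a,k}$ does not meet the unit circle. Hence $H_{a,k}\cap\{|w|>1\}$ is a union of connected components of $H_{a,k}$, and it contains the end at $-\infty$. On it, $x<a-k\,\mr{log}|w|\leq a$. You should therefore state the lemma for $w\in H_{a,k}\cap\{|w|\geq 1\}$, or for the unbounded part of $H_{a,k}$ when $a\leq -1$, rather than claim the whole set avoids the disc.

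A minor point: for $0<p<k\beta$ the failures occur at moderate values of $|w|$, not at very small ones. As $w\to 0$, $p\,\mr{log}|w|\to-\infty$, so those points actually satisfy the bound.
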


\begin{proof}
Just note that $\displaystyle \mr{log} \left( |w^p \mr{e}^{\beta w} | \right) = \frac{p}{2}\mr{log}(x^2+y^2) + \beta x \leq \beta \left(x + \frac{k}{2}\mr{log}(x^2+y^2) \right)$.
\end{proof}

\subsection{Straight half-planes}

Consider a half-plane $\mb{H} = \{ \mr{Re}(w) < a\}$ and a bounded function $g\in \mc{O}(\mb{H})$ with formal development $\hat{g}(w) = \sum_{\beta\in R} a_{\beta} \mr{e}^{\beta w}$.

In \cite[Section 2.7]{thom_irrational1}, we proved that the hyperfunction $h=\mc{L}g$ satisfied an estimate 
\[
|h(p)| \leq C \frac{\mr{e}^{-w_0 \mr{Re}(p)}}{|\mr{Im}(p)|}
\]
for $\mr{Re}(p) >0$.
Since the coefficient $a_{\beta}$ is the residue of $h$ at $p=\beta$, we get the estimate $|a_{\beta}|\leq C \mr{e}^{-w_0 \beta}$.
If the index set $R$ is not too dense (with polynomial density for example), this proves that the series $g(w) = \sum_{\beta \in R} a_{\beta} \mr{e}^{\beta w}$ converges normally in some half-plane.
If $R$ has more than exponential density, we might need another notion of convergence, maybe some kind of convergence by packages.

\subsection{Quadratic domains}

In the classical works about Dulac germs, the neighborhoods considered are the standard quadratic domains (cf. \cite{ilyashenko_centennial}).
Denote once again $\mb{H} = \{ \mr{Re}(w) < a \}$ for some $a\leq 0$, put $\Phi_C(w) = w - C \sqrt{1+ w^2}$ for $C>0$ (we use the usual determination of the square root : when $\mr{Re}(w)<0$, we have $\sqrt{1+w^2}\sim -w$ when $|w|\to \infty$).
The quadratic domain $\Omega_C$ is defined as $\Omega_C = \Phi_C(\mb{H})$ (for coherence with the rest of the article, we adapted the definition to consider $\Omega_C$ as a neighborhood of $-\infty$, we also introduced the parameter $a$ for flexibility).
For curiosity, we can try to understand to which notion of convergence these neighborhoods correspond.

The shape of $\Omega_C$ can be deduced from the assymptotics of its border $\partial_{}\Omega_C = \Phi_C(a + i \mathbb{R})$.
Note that 
\[
\begin{aligned}
| (1+C)w - \Phi_C(w)| & = C | w + \sqrt{1+w^2}|\\
&\leq \frac{C}{|w - \sqrt{1+w^2}|}.
\end{aligned}
\]
Since $(a+it) - \sqrt{1+ (a+it)^2} \sim 2 (a+it)$ when $t\to \pm\infty$, we see that $| (1+C)w - \Phi_C(w)|$ is bounded for $w\in a+i \mathbb{R}$ and $\Omega_C$ contains a straight half-plane.
Thus, the natural notion of convergence for functions bounded on quadratic domains is that of normal convergence.

\bibliography{mybib}{}

@incollection{ecalle_small_denominators,
 author = {Ecalle, Jean},
 title = {Compensation of small denominators and ramified linearisation of local objects},
 booktitle = {Complex analytic methods in dynamical systems. Proceedings of the congress held at Instituto de Matem\'atica Pura e Aplicada, IMPA, Rio de Janeiro, Brazil, January 1992},
 pages = {135--199},
 year = {1994},
 publisher = {Paris: Soci{\'e}t{\'e} Math{\'e}matique de France},
 language = {English},
 url = {smf4.emath.fr/Publications/Asterisque/1994/222/html/smf_ast_222_135-199.html},
 zbMATH = {671754},
 Zbl = {0810.58036}
}

@incollection{ecalle_growth_scale,
 author = {Ecalle, Jean},
 title = {The natural growth scale},
 booktitle = {Algebraic combinatorics, resurgence, moulds and applications (CARMA). Volume 1},
 isbn = {978-3-03719-204-7; 978-3-03719-704-2},
 pages = {93--223},
 year = {2020},
 publisher = {Berlin: European Mathematical Society (EMS)},
 language = {English},
 doi = {10.4171/204-1/4},
 zbMATH = {7219058},
 Zbl = {1494.30004}
}

@article{emmr_cycles_limites,
 author = {Ecalle, Jean and Martinet, Jean and Moussu, Robert and Ramis, Jean-Pierre},
 title = {Non-accumulation des cycles-limites. {I}. {II}. ({Nonaccumulation} of limit- cycles. {I}. {II}.)},
 fjournal = {Comptes Rendus de l'Acad{\'e}mie des Sciences. S{\'e}rie I},
 journal = {C. R. Acad. Sci., Paris, S{\'e}r. I},
 issn = {0764-4442},
 volume = {304},
 pages = {375--377, 431--434},
 year = {1987},
 language = {French},
 zbMATH = {3996689},
 Zbl = {0615.58011}
}

@article{ilyashenko_centennial,
 author = {Ilyashenko, Yu.},
 title = {Centennial history of {Hilbert}'s 16th {Problem}},
 fjournal = {Bulletin of the American Mathematical Society. New Series},
 journal = {Bull. Am. Math. Soc., New Ser.},
 issn = {0273-0979},
 volume = {39},
 number = {3},
 pages = {301--354},
 year = {2002},
 language = {English},
 doi = {10.1090/S0273-0979-02-00946-1},
 zbMATH = {1756730},
 Zbl = {1004.34017}
}

@article{ilyashenko_limit_cycles,
 author = {Il'yashenko, Yu. S.},
 title = {Finiteness theorems for limit cycles},
 fjournal = {Translations. Series 2. American Mathematical Society},
 journal = {Transl., Ser. 2, Am. Math. Soc.},
 issn = {0065-9290},
 volume = {148},
 pages = {55--64},
 year = {1991},
 language = {English},
 doi = {10.1090/trans2/148/06},
 zbMATH = {12934},
 Zbl = {0768.34017}
}

@book{morimoto_hyperfunctions,
 author = {Morimoto, Mitsuo},
 title = {An introduction to {Sato}'s hyperfunctions. {Transl}. from the {Japanese} by {Mitsuo} {Morimoto}},
 fseries = {Translations of Mathematical Monographs},
 series = {Transl. Math. Monogr.},
 issn = {0065-9282},
 volume = {129},
 isbn = {0-8218-4571-3},
 year = {1993},
 publisher = {Providence, RI: American Mathematical Society},
 language = {English},
 zbMATH = {482841},
 Zbl = {0811.46034}
}

@article{sato_hyperfunctions1,
 author = {Sato, Mikio},
 title = {Theory of hyperfunctions. {I}},
 fjournal = {Journal of the Faculty of Science. Section I},
 journal = {J. Fac. Sci., Univ. Tokyo, Sect. I},
 issn = {0368-2269},
 volume = {8},
 pages = {139--193},
 year = {1959},
 language = {English},
 zbMATH = {3143096},
 Zbl = {0087.31402}
}

@article{sato_hyperfunctions2,
 author = {Sato, Mikio},
 title = {Theory of hyperfunctions. {II}},
 fjournal = {Journal of the Faculty of Science. Section I},
 journal = {J. Fac. Sci., Univ. Tokyo, Sect. I},
 issn = {0368-2269},
 volume = {8},
 pages = {387--437},
 year = {1960},
 language = {English},
 zbMATH = {3158986},
 Zbl = {0097.31404}
}

@unpublished{thom_irrational1,
author = {Thom, O.},
title = {Irrational series {I}, {L}aplace transform in a neighborhood of $-\infty$},
note = {Disponible on Arxiv}
}
\bibliographystyle{acm}

\vfill
\textsc{Universidade Federal Fluminense - Instituto de Matemática e Estatística, Niterói, Brasil}

\textit{Email :} olivier\_thom@id.uff.br

\end{document}